\newtheorem{theorem}{\rm\bf Theorem}[section]
\newtheorem{proposition}[theorem]{\rm\bf Proposition}
\newtheorem{corollary}[theorem]{\rm\bf Corollary}
\newtheorem*{theorem*}{Theorem}
\newtheorem*{theorem 1}{\rm\bf Proposition 1}
\newtheorem*{theorem 2}{\rm\bf Proposition 2}
\theoremstyle{definition}
\newtheorem{definition}[theorem]{\rm\bf Definition}
\theoremstyle{remark}
\newtheorem{remark}[theorem]{\rm\bf Remark}
\newtheorem{example}[theorem]{\rm\bf Example}
\def\R#1{\mathbb{R}^{#1}}
\def\Ring{R}
\def\Field{K}
\def\Modul{M}
\def\half#1#2{\begin{matrix}\frac{#1}{#2}\end{matrix}}
\def\F{\mathbb{F}}
\def\Tree#1{\langle #1\rangle}
 \DeclareMathOperator{\Span}{Span}
\DeclareMathOperator{\trace}{tr}
\begin{document}

\title{The universality of one half in commutative non\-as\-sociative algebras with identities}

\author{Vladimir G. Tkachev}
\address{Department of Mathematics, Link\"oping University\\ Link\"oping, 58183, Sweden}
\email{vladimir.tkatjev@liu.se}

\begin{abstract}
In this paper we will explain an interesting phenomenon which occurs in general nonassociative  algebras. More precisely,
we establish that any finite-dimensional   commutative nonassociative algebra over a field satisfying an identity always contains $\frac12$ in its Peirce spectrum. We also show that the corresponding $\frac12$-Peirce module satisfies the Jordan type fusion laws. The present approach is based on an explicit representation of the Peirce polynomial for an arbitrary algebra identity. To work with fusion rules, we develop the concept of the Peirce symbol and show that it can be explicitly determined for a wide class of algebras.  We also illustrate our approach by further applications  to genetic algebras and algebra of minimal cones  (the so-called Hsiang algebras).
\end{abstract}

\keywords{Commutative nonassociative algebras; Algebras with identities; Idempotents; Peirce decomposition; Fusion laws ; Baric algebras ; Hsiang algebras}

\subjclass[2000]{
Primary 17A99, 17C27; Secondary 20D08}

\maketitle


\section{Introduction}

Algebras whose associativity is replaced by identities were a central topic in mathematics in the 20th century, including the classical theory of Lie and Jordan algebras. Recall that an algebra is called Jordan if any two elements $y,z\in A$ satisfy the following two identities:
\begin{align}\label{Jai}
zy-yz=0,\\
z((zz)y)-(zz)(zy)=0.\label{Jai2}
\end{align}
The Peirce decomposition relative to an algebra idempotent is an important tool in the structure study of any nonassociative algebra. For example, the multiplication operator by an idempotent in a Jordan algebra is diagonalizable and the corresponding Peirce decomposition (relative to an idempotent $c$) into invariant subspaces
\begin{equation}\label{halfpeirce}
A=A_c(0)\oplus A_c(1)\oplus A_c(\half12)
\end{equation}
is compatible with the multiplication in the sense that the multiplication of eigenvectors is described by certain  multiplication rules, also known as fusion laws. In particular,
\begin{equation}\label{halffusion}
A_c(\lambda)\,A_c(\half12)\subset {A_c(\lambda)}^\bot, \qquad \forall \lambda \in\{0,\half12,1\}.
\end{equation}

To formulate the main  result of our paper, let us briefly recall some well-known relevant  concepts; see however the concise definitions and motivating examples  in the next sections. Starting with a univariate algebra identity $P(z)=0$, its Peirce polynomial $\varrho_c(P,t)$ is obtained from the linearization of $P$ at $z=c$, where $c$ is a nonzero algebra idempotent. The key observation here is that the first linearization of $P$ is essentially a polynomial in $L_c$, where $L_c:x\to cx$ is the multiplication operator (adjoint at $c$). This implies that  for any eigenvector of $L_c$ non-collinear with $c$, its eigenvalue $\lambda\in\sigma(L_c)$ must annihilate the Peirce polynomial: $\varrho_c(P,\lambda)=0$. This yields an a priori inclusion
$$
\sigma(L_c)\subset \sigma(P,c):=\{t\in \Field:\varrho_c(P,t)=0\},
$$
where the latter zero locus is called the \textit{Peirce spectrum} of $P$ at $c$. Therefore, it is natural to think of the values in $\sigma(P,c)$ as eigenvalues of $L_c$, some having maybe multiplicity zero.

For example, any Jordan algebra $A$ satisfies $P(z):=zz^3-z^2z^2=0$ (a specialization of \eqref{Jai2} for $y=z$) and it is well known that $\varrho_c(P,\lambda)=\lambda(2\lambda-1)(\lambda-1)$, thus $\sigma(P,c)=\{0,\frac12,1\}$; see also a derivation and discussion in Example~\ref{ex:Jordan} below.

Both the explicit form and the structure properties of $\varrho_c(P,\lambda)$ may change drastically depending on an algebra identity $P$ and a choice of an idempotent $c$. It is the main goal of the present paper to establish a remarkable property that the Peirce eigenvalue $\lambda=\frac12$ and the corresponding fusion laws \eqref{halffusion} are universal in the following natural sense.

\begin{theorem}\label{th1}
Let $A$ be a finite dimensional commutative nonassociative algebra over a field of characteristic $\ne2,3$ and let $A$ satisfy a nontrivial weighted polynomial identity $P(z)=0$ in one nonassociative indeterminate $z$. Then the following holds:
\begin{itemize}
\item[(A)]
$\frac12\in\sigma(P,c)$   for any idempotent $c\ne0$;
\item[(B)]
if additionally $c$ is  semi-simple and $\lambda$ is single  root of the Peirce polynomial $\,\varrho(P,t)$ then
\begin{equation}\label{fusion12}
A_c(\lambda)A_c(\half12)\subset A_c(\lambda)^\bot:=\bigoplus_{\nu\in \sigma(c), \nu\ne \lambda}A_c(\nu).
\end{equation}
\end{itemize}
\end{theorem}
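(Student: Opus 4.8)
\emph{The Peirce polynomial as a sum over trees.} A monomial $\mu$ of degree $n$ in the single nonassociative indeterminate $z$ is a full binary tree with $n$ leaves, and since $A$ is commutative with $c^2=c$, substituting $z=c$ collapses every monomial to $c$. Hence replacing one leaf $v$ of $\mu$ by $x$ and all other leaves by $c$ produces $L_c^{\operatorname{depth}(v)}x$, because along the root-to-$v$ path every sibling subtree is built from $c$'s alone. Writing $P=\sum_\mu c_\mu\mu$, the first linearization is therefore $D_xP(c)=\varrho(P,L_c)x$ (with, in the weighted case, an additional scalar term proportional to $c$), where the Peirce polynomial has the explicit form
\[
\varrho(P,t)=\sum_\mu c_\mu\,q_\mu(t),\qquad q_\mu(t):=\sum_{v}t^{\operatorname{depth}(v)},
\]
the inner sum running over the leaves of $\mu$; in particular $\varrho$ does not depend on $c$. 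In the weighted case one also records that the weight vanishes on every $A_c(\nu)$ with $\nu\ne1$ and equals $1$ at $c$, so the extra $c$-terms are irrelevant below.

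\emph{Part (A).} Here I would use the one combinatorial identity that drives the whole argument: for every full binary tree, $q_\mu(\tfrac12)=\sum_v 2^{-\operatorname{depth}(v)}=1$ (the Kraft equality, proved by induction on the tree: a leaf gives $1$, and $\mu=\mu_1\cdot\mu_2$ gives $\tfrac12 q_{\mu_1}(\tfrac12)+\tfrac12 q_{\mu_2}(\tfrac12)$). Hence $\varrho(P,\tfrac12)=\sum_\mu c_\mu$. But evaluating the identity $P=0$ at the given nonzero idempotent $c$ gives $(\sum_\mu c_\mu)\,c=0$, so $\sum_\mu c_\mu=0$; therefore $\varrho(P,\tfrac12)=0$, i.e.\ $\tfrac12\in\sigma(P,c)$ for every idempotent $c\ne0$.

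\emph{Part (B).} Now I would pass to the second linearization $D_yD_xP(c)=0$. By the same collapsing, placing $x$ at a leaf $v_1$, $y$ at a distinct leaf $v_2$ and $c$ at the others turns $\mu$ into $L_c^{e}\bigl[(L_c^{d_1}x)(L_c^{d_2}y)\bigr]$, where $w$ is the nearest common ancestor of $v_1$ and $v_2$, $e=\operatorname{depth}(w)$ and $d_i=\operatorname{depth}(v_i)-e-1$. For $x\in A_c(\lambda)$ and $y\in A_c(\tfrac12)$ this monomial equals $\lambda^{d_1}2^{-d_2}L_c^{e}(xy)$, so the relation $D_yD_xP(c)=0$ reads $R(L_c)(xy)=0$, where $R(u)=\sum_\mu c_\mu\sum_{(v_1,v_2)}\lambda^{d_1}2^{-d_2}u^{e}$ (in the weighted case there are extra contributions, but they lie in $A_c(\tfrac12)$ and so drop out of the $A_c(\lambda)$-component we use, or vanish outright when $\lambda=\tfrac12$). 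The crux is the tree identity, for each $\mu$,
\[
\sum_{(v_1,v_2)}\lambda^{d_1}2^{-d_2}u^{e}=\frac{q_\mu(u)-q_\mu(\lambda)}{u-\lambda},
\]
proved by checking that both sides satisfy $F_{\mu_1\cdot\mu_2}(u)=u\bigl(F_{\mu_1}(u)+F_{\mu_2}(u)\bigr)+q_{\mu_1}(\lambda)+q_{\mu_2}(\lambda)$ with $F_{\mathrm{leaf}}=0$; on the left the cross-pairs ($v_1$ in $\mu_1$, $v_2$ in $\mu_2$) contribute $q_{\mu_1}(\lambda)\,q_{\mu_2}(\tfrac12)=q_{\mu_1}(\lambda)$, which is the only place the value $\tfrac12$ is used, and it is exactly Part~(A)'s identity reappearing. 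Summing over $\mu$, $R(u)=\bigl(\varrho(P,u)-\varrho(P,\lambda)\bigr)/(u-\lambda)$ is a genuine polynomial. Since $c$ is semi-simple, $xy=\sum_\nu z_\nu$ with $z_\nu\in A_c(\nu)$, and taking the $A_c(\lambda)$-component of $R(L_c)(xy)=0$ gives $R(\lambda)z_\lambda=0$; but $R(\lambda)=\varrho'(P,\lambda)\ne0$ because $\lambda$ is a single root of $\varrho(P,\cdot)$, so $z_\lambda=0$ and $xy\in\bigoplus_{\nu\ne\lambda}A_c(\nu)=A_c(\lambda)^\bot$, which is \eqref{fusion12}.

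\emph{The main obstacle.} Conceptually there is almost nothing here: the whole theorem hangs on $q_\mu(\tfrac12)=1$ (the same reason $A_c(\tfrac12)$ is the tangent space at $c$ to the variety of idempotents). The effort is organizational: putting the first and second linearizations of an arbitrary identity into the clean leaf/ancestor form — this is where commutativity and $c^2=c$ are used — and verifying the tree recursion that identifies $R$ with a divided difference of $\varrho$, which I expect to be the only genuinely delicate step; the bookkeeping of the scalar weight terms in the baric case is a minor nuisance. The hypotheses enter transparently: $\operatorname{char}\ne2$ makes $\tfrac12$ available, semi-simplicity of $c$ supplies the eigendecomposition of $xy$, ``single root'' is precisely $\varrho'(P,\lambda)\ne0$, and $\operatorname{char}\ne3$ is the standing assumption guaranteeing that the linearizations detect $P$ faithfully. (The polynomial $R$ above is a specialization of the paper's Peirce symbol, and this computation is exactly the evaluation of it that yields the fusion law.)
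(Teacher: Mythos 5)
Your argument is, in substance, the paper's own: the leaf--depth formula $q_\mu(t)=\sum_v t^{\operatorname{depth}(v)}$ is the closed form of the recursively defined Peirce operator of Section~\ref{sec:Peirce}, the Kraft equality $q_\mu(\half12)=1$ is Proposition~\ref{pro:half}, and your divided--difference identity for $R$ is exactly \eqref{eH} combined with \eqref{Y12lam}. Part (A) is correct and coincides with the paper's proof.

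The one genuine gap is the parenthetical in Part (B) dismissing the weighted-case contributions. Linearizing $\sum_\alpha\phi_{z^\alpha}(z)z^\alpha$ twice at $c$ produces, besides $\sum_\alpha\phi_{z^\alpha}(c)D^2(z^\alpha;c,x,y)$, three correction terms (Proposition~\ref{pro:D2}): $\tau_2(x,y)\,c\in A_c(1)$, $\tau_1(x,\half12)\,y\in A_c(\half12)$, and $\tau_1(y,\lambda)\,x\in A_c(\lambda)$, where $\tau_1(u,a)=\sum_\alpha\delta_1\phi_{z^\alpha}(c;u)\,\varrho(z^\alpha,a)$. Only the middle one lies in $A_c(\half12)$, and it vanishes anyway by the Kraft identity again, since $\tau_1(x,\half12)=\tau(x)=0$ for $x$ in an eigenspace other than $A_c(1)$. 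The first lies in $A_c(1)$, and the third lies in $A_c(\lambda)$ --- precisely the component you are extracting --- so your equation $R(\lambda)z_\lambda=0$ should read $\varrho_c'(P,\lambda)z_\lambda=-\tau_1(y,\lambda)x$, which is the paper's \eqref{fus1}. For constant-coefficient identities all $\delta_1\phi_{z^\alpha}$ vanish and your proof is complete as written; for genuinely weighted identities one must still argue that $\tau_1(y,\lambda)=0$ for $y\in A_c(\half12)$ and $\lambda\ne\half12$. This holds in the paper's examples (baric, Hsiang), where each $\delta_1\phi_{z^\alpha}(c;\cdot)$ vanishes termwise on $\bigoplus_{\nu\ne1}A_c(\nu)$, but it does not follow from the single relation $\sum_\alpha\delta_1\phi_{z^\alpha}(c;y)=0$ that the first linearization supplies; so this step needs to be addressed rather than waved away.
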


A few remarks are worth noting at this time. First, note that the above results are valid for nontrivial identities $P(z)$ depending on \textit{one} nonassociative indeterminate (see Section~\ref{sec:prelim} for a further discussion and  Section~\ref{sec:trees} for exact definitions). If an algebra satisfies an identity in several nonassociative indeterminates, one can obtain a univariate algebra identity by  substituting a fixed variable (or arbitrary polynomials in one fixed variable, in general) for all indeterminates. For example, the substitution $y=z$ in \eqref{Jai2} yields a nontrivial identity \eqref{paai}, while the same substitution in the commutativity identity \eqref{Jai} amounts to the \textit{trivial} identity $0=0$.

Next, note that the claim (A) of Theorem~\ref{th1} is  quite natural and  expected  in the following sense: it was shown in \cite{KrTk18a} that  the Peirce spectrum of a \textit{generic} commutative nonassociative algebra, i.e. an algebra with maximal possible finite number of idempotents ($=2^{\dim A}$), does not contain $\frac12$. On the other hand, the variety of all nonassociative algebras on a finite-dimensional vector space $V$ can be identified with the tensor product $\mathcal{V}=V^*\otimes V^*\otimes V$, where generic algebras form a dense in Zariski topology subset $\mathcal{V}_{gen}$ of $\mathcal{V}$. In this picture, a subset of algebras with a fixed identity can be understood as a subvariety of $\mathcal{V}$. Then (A) in Theorem~\ref{th1}  combined with  results of \cite{KrTk18a} implies that  this subvariety must be \textit{non-generic}, i.e. lie in $\mathcal{V}\setminus \mathcal{V}_{gen}$.

Beside the Jordan and general power associative algebras, the results of Theorem~\ref{th1} are  known in the following particular cases.
\begin{itemize}
\item[1)] For train baric algebras of general rank (involving either principal powers $x^1=x$, $x^{n+1}=xx^n$ or plenary powers $x^{[1]}=x$, $x^{[n+1]}=x^{[n]}x^{[n]}$),\, the presence of the Peirce number  $\frac12$ in the algebra spectrum and some analogues of \eqref{fusion12} were established by Guzzo \cite{Guzzo94} and  Guti\'errez Fern\'andez~\cite{Fernand00}. These classes of algebras have a particular interest for mathematical genetics. \

\item[2)] For metrized nonassociative algebras, i.e. algebras admitting a nondegenerate symmetric bilinear form satisfying the associating condition \eqref{Qass}, some extremal properties of the Peirce number  $\frac12$ and an analogue of \eqref{fusion12} for $\lambda=\frac12$ were recently established in \cite{Tk18a}, \cite{Tk18b} and Proposition~6.7 in \cite{KrTk18a}.
\end{itemize}

Another motivation for the above results comes from Majorana and axial algebras \cite{Ivanov09}, \cite{Ivanov15}, \cite{HRS15} with the most prominent example being the   real 196883-dimensional Conway-Griess-Norton algebra $V_\mathbb{M}$ of the Monster sporadic  simple group $\mathbb{M}$ \cite{Griess82}, \cite{Norton94}. All these algebras are commutative nonassociative and satisfy certain $\mathbb{Z}/2$-graded fusion laws that makes a part of their structure theory similar to that of the classical Jordan algebras. But in contrast to the Jordan algebra case, the Majorana algebras (in particular, the Conway-Griess-Norton algebra) are generated by idempotents with the spectrum $\{0,1,\frac14,\frac1{32}\}$ consisting of four elements. Greiss proved in \cite{Griess85} that there is no nontrivial homogeneous polynomial identity for $V_{\mathbb{M}}$  of degree $5$ except for the commutativity law \eqref{Jai}. He also asked in \cite{Griess85}, \cite{Griess2010} if there are some (relevant) homogeneous polynomial identities for $V_{\mathbb{M}}$? Combining the absence of $\frac12$ in the algebra spectrum with Theorem~\ref{th1} suggests that the answer on  Greiss' question should be negative at least for  identities understood in the  sense of the present paper.

Furthermore, note  that a general axial algebra is generated by idempotents with a priori arbitrary Peirce spectrum as, for example, axial algebras of Jordan type with Peirce spectrum $\{0,\eta,1\}$. It follows from the recent results of \cite{HSS18}, \cite{HSS18b}, \cite{DeMedts17} that the case $\eta=\frac12$ is very distinguished and leads to a different behaviour of the corresponding  algebras.

\medskip
Our paper is organised as follows. Section~\ref{sec:prelim} recalls elementary facts on nonassociative algebras and the Peirce decomposition. Section ~\ref{sec:trees} gives basic definitions of free nonassociative algebras and introduces the formalism of complete binary trees. Using this formalizm, the linearization of a nonassociative monomial $z^\alpha$ can be understood as a certain evaluation on a labeled  binary tree. Note that the connection between nonassociative algebras and complete binary trees is not new and exploited since Etherington's papers \cite{Etherington49}, \cite{Etherington60} in genetic algebras with primarily emphasis on combinatorial structure and enumerations; see also \cite{Lyubich92} and a very recent paper of Mallol and Varro \cite{Mallol17}. But as far as we know this approach have not been studied systematically for the Peirce decomposition in the general case.   In Section~\ref{sec:Peirce} we define the Peirce operator associated with a binary tree and study its basic properties. In particular, Proposition~\ref{pro:half} is an important ingredient in the proof of the claim (A) of Theorem~\ref{th1}. The second order linearizations of  arbitrary nonassociative monomials are studied in Section~\ref{sec:second}. The main result of this section is Proposition~\ref{th311} establishes a relation for the second order linearization in terms of the so-called Peirce symbol  $\frak{D}(z^\alpha;a,b,p)$ playing a fundamental role in the proof of the fusion laws \eqref{fusion12}. We also obtain explicit formulae for the Peirce symbol in some special cases. The Peirce polynomial and the Peirce spectrum for a general algebra with a weighted identity are studied in Section~\ref{sec:Peircepol}. Here we also finish the proof of Theorem ~\ref{th1}. Certain nontrivial univariate polynomial identities $P(z)=0$ have identically zero Peirce polynomial, thus saying nothing  about the algebra spectrum. Such degenerate situation may appear for identities of degree at least four; some examples can be found in \cite{Bayara10} and \cite{EldLabra}. We study the degenerated  identities  in Section~\ref{sec:trivial}.

In the remainder of this paper, we obtain some further applications of our method. In particular, we revisit in Section~\ref{sec:appl} principal and plenary train algebras of general rank and give a short derivation of some recent results due to  Guzzo ~\cite{Guzzo94} and Guti\'errez Fern\'andez~\cite{Fernand00}. Further, we establish  the Peirce decomposition of  nonassociative algebras of cubic minimal cones (the so-called Hsiang algebras) and study its basic properties. Interestingly, the latter class can be thought as a generalization of the Nourigat--Varro algebras considered in  \cite{NourigatI}, \cite{NourigatII}, \cite{NourigatIII} in the setting of baric algebras.

\medskip
\textit{Acknowledgements.}
While some of our main results were obtained earlier during the  work on Hsiang algebras, the inspiration for this paper derives from the Axial Algebra Workshop at Bristol, May 2018. The author thanks the Heilbronn Institute and the organizers Justin McInroy and Sergey Shpectorov for making it possible to attend this event. The author also thanks  the participants of the workshop, in particular Yoav Segev and Tom de Medts for many fruitful discussions.

\section{Preliminaries ad some motivating examples}\label{sec:prelim}
In this section we  briefly recall some standard terminology that will be suitably extended to the labeled binary trees in the next sections, and also discuss several motivating examples.

By an algebra $A$ we shall always mean a commutative, maybe nonassociative, finite dimensional algebra over a  filed $\Field$ of $\mathrm{char}(\Field) \ne 2,3$.  An element $c$ of algebra $A$ is called an idempotent if $c^2=c$. Given an idempotent $c$, one can define the (left$=$right) multiplication endomorphism $L_c\in \mathrm{End}_\Field(A)$ by
$$
L_c:x\to cx.
$$
The  characteristic polynomial of $L_c$  is called \textit{Peirce
polynomial} of $c$. The set $\sigma(c)$ of the roots of the  characteristic polynomial  is called the \textit{Peirce spectrum} of the idempotent $c$. The Peirce spectrum is always nonempty because $1\in \sigma(c)$. An idempotent $c$ is called \textit{semi-simple} if $A$ splits into a direct sum of simple invariant submodules $A_c(\lambda)$, $\lambda\in \sigma(c)$:
\begin{equation}\label{Peircede}
A=\bigoplus_{\lambda\in \sigma_f(c)}A_c(\lambda),
\end{equation}
where $L_c$ acts as the multiplication by $\lambda$ on each $L_c$-invariant submodule $A_c(\lambda)$.
This decomposition is also known as the \textit{Peirce decomposition} of $A$ relative to $c$.

An important ingredient of the Peirce method is the multiplication structure of the $L_c$-invariant submodules. This structure is determined by the so-called \textit{fusion laws} prescribing how the product  $A_c(\lambda)\,A_c(\mu)$ decomposes in \eqref{Peircede} for various $\lambda,\mu\in \sigma(c)$. In other words, a fusion law is a map
$$
\star:\sigma(c)\times \sigma(c)\to 2^{\sigma(c)}
$$
 such that
\begin{equation}\label{fusionlaws}
A_c(\lambda)A_c(\mu)\subset \bigoplus_{\nu\in \lambda\star \mu}A_c(\nu).
\end{equation}
Sometimes it is convenient to assume that \eqref{fusionlaws} is minimal in an obvious sense.

For example, the Peirce polynomial associated with the Jordan algebra identity \eqref{Jai2} does not depend on a particular choice of an idempotent $c$ and given by
\begin{equation}\label{charJai}
\varrho_c(P,L_c):=2L_c^3-3L_c^2+L_c=2(L_c-\half12)(L_c-1)L_c,
\end{equation}
see \cite{Schafer} and also the derivation of \eqref{charJai} in Example~\ref{ex:Jordan} below. Therefore the Peirce  spectrum  consists of three eigenvalues: $\sigma_c(P)=\{1,0,\half12\}$. A further argument based on the second order linearization  reveals  that the $L_c$-invariant submodules satisfy \eqref{halfpeirce} the \textit{Jordan  fusion laws}:
\begin{equation}
	\renewcommand{\arraystretch}{1.3}
	\setlength{\tabcolsep}{0.65em}
	\begin{array}{c|ccc}
			\star\,\,	& 1\,\, &\,\, 0 & \,\, \half12  \\
		\hline
			 1  &  \{1\}  &\,\,  \emptyset  &\,\,  \{\half12\}  \\
			 0  &	&\,\,  \{0\}  &\,\,  \{\half12\}  \\
			 \half12  &	&	&\,\,  \{1,0\}  \\
	\end{array}
	\end{equation}

In general, the linearization technique is an important tool in extracting the Peirce decomposition and the corresponding fusion laws in a  nonassociative algebra with an identity  \cite{Zhevlakov}, \cite{McCrbook}, \cite{Rowenbook}. We discuss the linearization technique and the related  concepts for general nonassociative structures in Sections~\ref{sec:trees}--\ref{sec:second} below. In short, the Peirce spectrum $\sigma(P,c)$ of the identity $P$  emerges from the first order linearization $D^1(P;c,y)$, while the corresponding fusion rules are systematically extracted from the second order derivation $D^2(P;c,x,y)$. The latter technique requires some more care, but the key ingredient is simple: one can show that the second linearization evaluated at an idempotent always depends on the product of $xy$ but not on $x$ or $y$ individually. The latter readily yields fusion rules \eqref{fusionlaws}.

It  is also worth mentioning  that the algebra identity concept  considered in this paper is different from the standard PI (polynomial identity) definition for  associative rings, cf. for example \cite[Sec.~23]{Rowenbook}. Our approach is rather in the spirit of \cite{Osborn}, \cite{McCr65} and \cite{Koecher} which especially  suitable for nonassociative structures in the presence of certain analytic structures. Still, the algebra identity in the sense of \cite{Osborn} is defined as a nonassociative polynomial identity with \textit{constant} coefficients, as for instance the power-associativity identity
\begin{equation}\label{paai}
zz^3-z^2z^2=0.
\end{equation}

Instead, we  allow the coefficients depend on the indeterminate to comprise all reasonable identities. These include, for example, all train baric algebras \cite{Reed} carrying  a nontrivial $\Ring$-homomorphism $\,\omega(x):A\to \Ring$ with a prominent example being the Bernstein algebras  satisfying
\begin{equation}\label{JaiBernstein}
z^2z^2=\,\omega(z)^2z^2,
\end{equation}
see \cite{WBusekros}, \cite{Lyubich92} and also Section~\ref{sec:appl} below for more detailed discussion of various classes of baric algebras.
Another well-known examples are pseudo-composition algebras \cite{Meyberg1} satisfying
\begin{equation}\label{PsComp}
z^3=b(z,z)\,z,
\end{equation}
with $b$ being a symmetric bilinear form, or general rank three algebras \cite{Walcher1},
\begin{equation}\label{Walch}
z^3=a(z) z^2+b(z)\,z.
\end{equation}
Some further examples include baric train algebras of general rank \cite{Reed}, \cite{Fernand00} and rank four identities considered recently in  \cite{NourigatI}--\cite{NourigatIII}.
We also mention the class of nonassociative commutative algebras of cubic minimal cones (the so-called Hsiang algebras) satisfying defining identity
\begin{equation}\label{Hsiang}
4zz^3+z^2z^2=3b(z,z)\,z^2+ 2b(z^2,z)\,z,
\end{equation}
where $b$ is an associating symmetric  bilinear form (i.e. satisfies \eqref{Qass} below). We discuss  this algebras and their Peirce decomposition in Section~\ref{sec:Hsiang} below.

All the above examples share a remarkable common property that the quantity $\frac12$ is in their Peirce spectrum. This value is exceptional in several respects, see a recent discussion in \cite{KrTk18a}, \cite{HRS15b}, \cite{HSS18}, \cite{HSS18b}, \cite{Tk18a}, \cite{Tk18b}. In particular, it was shown in \cite{Tk15b}, \cite{KrTk18a} that any idempotent of a commutative nonassociative algebra over reals with positive definite associative bilinear form is primitive and each its Peirce eigenvalue is $\le \half12$. It is the main purposes of this paper to establish a universal character of $\half12$ and the corresponding  fusion laws for an \textit{arbitrary} nonassociative algebra with an identity.

%

\section{Nonassociative algebras and complete binary trees}\label{sec:trees}

Below we consider the terminology for the general case of linearization of arbitrary order, although we are primarily interested in and make use of linearization of orders $1$ and $2$. Since the linearization method depends crucially on the relation between the degree of a defining identity and the characteristic of  the ground field $\Field$, some further care is needed when working with linearizations of order $\ge3$.  Note, however, that the requirement $\mathrm{char} \Field \ne2,3$ suffices for many of our prinipal results.

\subsection{Commutative groupoids}
Let us recall some standard concepts and definitions following \cite[Appendix 21B]{Rowenbook}, see also \cite{Kurosh47}, \cite{Osborn}. Let $N(X)$ be a commutative multiplicative groupoid generated by elements $x$ of an at most countable set $X$ (free magma). We shall denote the multiplicative operation by juxtaposition. In other words, $N(X)$ consists of all words of finite length that can be formed using the elements of $X$ and using parentheses to indicate the way in which each word is built up by a sequence of juxtapositions. Two words are considered distinct elements of $N(X)$ unless they are identical in every way including the positions of all the parentheses.

Since we consider the commutative case only we have
$$
x^3:=x(xx)=xx^2=x^2x, \qquad \text{where } x^2:=xx.
$$
Then the simplest example is the commutative groupoid generated by a single element $x$:
$$
N(x)=N(\{x\})=\{x^\alpha: x,\,\, x^2,\,\, x^3,\,\, x^2x^2,\,\, x^4,\,\, x^5,\,\, x(x^2x^2),\ldots\}
$$
Its elements are called nonassociative monomials.

The total degree $\deg x$ of an element $x\in N(X)$ is the number of elements of $X$ used in the word $x$ counting multiplicities. Thus, elements of $X$ have degree $1$, and the degree of a product of two elements of $N(X)$ is the sum of the degrees of the factors. Thus defined, the degree is one more than the number of \textit{products} needed to express the element in terms of elements of $X$, and two more than the number of pairs of parentheses needed to indicate the order in which the products are to be taken (if the degree is greater than 1). Furthermore, if $x_i \in X$ and $x\in N(X)$, the degree of $x_i$ in $x$ is the number of times that $x_i$ occurs
in the word representing $x$.

Let $\Ring$ be an commutative associative ring with unity element $1$ and let $\Ring(X)$ denote the free nonassociative algebra on $X$ over $\Ring$ (the magma algebra), i.e. the left $\Ring$-module given by the set of all finite linear combinations of elements of $N(X)$ with coefficients from $\Ring$  and  multiplication defined by
$$
(\sum\alpha_i z_i)(\sum\beta_j w_j)=\sum \alpha_i\beta_j (z_iw_j)
$$
According to \cite{Rowenbook}, an element of $\Ring(X)$ is called a nonassociative polynomial.

Then the above definition has a universal meaning  as the following elementary  observation shows.

\begin{proposition}[Proposition~1.1 in \cite{Osborn}]\label{pro:subs}
Let $A$ be any algebra over $\Ring$ and let $\,\varrho$ be a map
of the set $X$ into $A$. Then there exists exactly one $\Ring$-homomorphism $\theta$ of $\Ring(X)$ into $A$ such that $\theta(x)=\,\varrho(x)$ for all $x\in X$.
\end{proposition}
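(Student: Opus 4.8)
The statement to prove is the universal property of the free nonassociative algebra $R(X)$: given any $R$-algebra $A$ and a set map $\varrho\colon X\to A$, there is exactly one $R$-algebra homomorphism $\theta\colon R(X)\to A$ extending $\varrho$. This is a completely standard "freeness" argument, and the proof falls naturally into three pieces: first extend $\varrho$ to the free magma $N(X)$ by recursion on word length (equivalently, on degree), then extend $R$-linearly to all of $R(X)$, and finally check that the resulting map is multiplicative; uniqueness is immediate from the fact that the values on $X$ together with $R$-linearity and multiplicativity determine $\theta$ on every monomial.

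\begin{proof}
\emph{Existence.} We first define $\theta$ on the free magma $N(X)$ by induction on the total degree. For a generator $x\in X$ (degree one) set $\theta(x):=\varrho(x)$. If $w\in N(X)$ has degree $\geq 2$, then by the construction of $N(X)$ it is uniquely of the form $w=w_1w_2$ with $w_1,w_2\in N(X)$ of strictly smaller degree (the outermost pair of parentheses is determined by $w$, since two words of $N(X)$ are equal only if they are identical including the placement of all parentheses); set
$$
\theta(w_1w_2):=\theta(w_1)\,\theta(w_2),
$$
the product being taken in $A$. This is well defined because the decomposition $w=w_1w_2$ is unique and the recursion strictly decreases degree. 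Now extend $\theta$ to $R(X)$ by $R$-linearity: every element of $R(X)$ is a finite sum $\sum_i \alpha_i w_i$ with $\alpha_i\in R$ and $w_i\in N(X)$ distinct, and we put $\theta(\sum_i\alpha_i w_i):=\sum_i\alpha_i\,\theta(w_i)$. Since $N(X)$ is an $R$-basis of the left $R$-module $R(X)$, this is a well-defined $R$-module homomorphism.

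It remains to verify that $\theta$ respects multiplication. By the bilinearity of the product in both $R(X)$ and $A$ over $R$ (using commutativity of $R$ and the module axioms), it suffices to check $\theta(uv)=\theta(u)\theta(v)$ for basis elements $u,v\in N(X)$; but $uv\in N(X)$ is by definition the word obtained by juxtaposition, and $\theta(uv)=\theta(u)\theta(v)$ holds by the defining recursion above. Hence for general $u=\sum_i\alpha_i u_i$ and $v=\sum_j\beta_j v_j$,
$$
\theta(uv)=\theta\Big(\sum_{i,j}\alpha_i\beta_j\,u_iv_j\Big)=\sum_{i,j}\alpha_i\beta_j\,\theta(u_iv_j)=\sum_{i,j}\alpha_i\beta_j\,\theta(u_i)\theta(v_j)=\theta(u)\theta(v),
$$
so $\theta$ is an $R$-algebra homomorphism with $\theta|_X=\varrho$.

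\emph{Uniqueness.} Suppose $\theta'\colon R(X)\to A$ is any $R$-algebra homomorphism with $\theta'(x)=\varrho(x)$ for all $x\in X$. We show $\theta'(w)=\theta(w)$ for every $w\in N(X)$ by induction on degree: this holds for generators by hypothesis, and if $w=w_1w_2$ with $w_1,w_2$ of smaller degree then $\theta'(w)=\theta'(w_1)\theta'(w_2)=\theta(w_1)\theta(w_2)=\theta(w)$ by the inductive hypothesis and multiplicativity. Since $N(X)$ spans $R(X)$ over $R$ and both maps are $R$-linear, $\theta'=\theta$ on all of $R(X)$.
\end{proof}

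There is essentially no obstacle here; the only point requiring a moment's attention is the \emph{well-definedness} of the recursion on $N(X)$, namely that a word of degree $\geq 2$ factors uniquely as $w_1w_2$. This is exactly the property of the free magma emphasised in the text ("two words are considered distinct elements of $N(X)$ unless they are identical in every way including the positions of all the parentheses"), so it can simply be invoked. If one wanted to be fully formal one could instead define $N(X)$ as a set of parenthesised strings and read off $w_1,w_2$ by matching the outermost parentheses, but for the purposes of this paper citing the free-magma property suffices.
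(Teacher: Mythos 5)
Your proof is correct and is the standard universal-property argument for the free (magma) algebra; the paper itself gives no proof, simply citing Osborn, so there is nothing to compare against. One small point worth noting: since the paper's $N(X)$ is the \emph{commutative} free magma, a word of degree $\ge 2$ factors uniquely only as an \emph{unordered} pair $\{w_1,w_2\}$, but because the target algebra $A$ is commutative the recursion $\theta(w_1w_2):=\theta(w_1)\theta(w_2)$ is still well defined, so your argument goes through unchanged.
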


A map $\,\varrho$ in Proposition~\ref{pro:subs} is the substitution homomorphism of elements of an algebra $A$ for elements of $X$. An element
$f\in \Ring(X)$  is an \textit{identity} on a $\Ring$-algebra $A$ (or $A$ satisfies the identity $f$) if every $\Ring$-homomorphism of $\Ring(X)$ into $A$ takes $f$ into zero. In this paper, however, we consider a more general definition. It is more convenient from now on to work with a field $\Field$ instead of a ring $\Ring$. To avoid some obvious complifications, we shall always assume that
$$
\mathrm{char} \Field \ne 2,3,
$$
and also that $A$ is torsion-free, i.e. $\lambda x=0$ with $\lambda\in \Ring$ and $0\ne x\in A$ implies $\alpha=0$.
A function $\phi:A\to \Ring$ is called  a polynomial map of degree $m\in \{0,1,2,\ldots\}$ if for any fixed $x,y\in A$, $\phi(x+ty)$ is a polynomial of degree $m$ in $t$.

\begin{definition}
A commutative algebra $A$ is said to satisfy a \textit{weighted  identity}
\begin{equation}\label{identity}
P(z):=\sum_{z^\alpha\in N(X)}\phi_{z^\alpha}(z)z^\alpha =0,
\end{equation}
if the sum contains finitely many terms, each coefficient $\phi_{z^\alpha}(z)$ is  polynomial map, and \eqref{identity} holds for any substitution $\,\varrho:X\to A$.
\end{definition}

Setting in \eqref{identity} $\,\varrho(z)=c$, where $c\in A$ is an arbitrary nonzero idempotent, yields by virtue of
$$
c^\alpha=c
$$
that
$$
0=\sum_{\alpha}\phi_{z^\alpha}(c)c^\alpha=(\sum_{\alpha}\phi_{z^\alpha}(c))c.
$$
Since $A$ is torsion free and $c\ne 0$,  we have
\begin{equation}\label{xc}
\sum_{\alpha}\phi_{x^\alpha}(c)=0.
\end{equation}
Observe that this identity may be trivial, as for example for  the power-associative algebra identity \eqref{paai}. In general, however, this implies a constraint on the values of $\phi_{x^\alpha}$ evaluated for $c$.

\subsection{Complete binary trees and nonassociative monomials }
The simplest nonassociative monomials are the principal powers, defined by induction as
$$
z^1=z,\quad z^{n+1}=zz^n, \quad n\ge1.
$$
On the other hand, to work with general nonassociative monomials and, especially, to linearize them much more care is needed. To deal with the general case we interpret nonassociative monomials as complete binary trees. This formalism is very natural to work with the indices of powers in algebraic systems having non-associative multiplication and comes back to Etherington's papers in genetic algebras \cite{Etherington49}, \cite{Etherington62}. Etherington himself, however, applied never this formalism for recovering of the Peirce structure and worked primarily with combinatorial structure of a nonassociative multiplication. In this paper, we extend the binary tree formalism to labeled binary trees and develop it to study the general Peirce decomposition. Some  related results for baric algebras and for principal or plenary powers were recently obtained in \cite{Guzzo94}, \cite{Fernand00} and \cite{Mallol17}.

We recall the basic concepts and definitions of binary tress below, see for example \cite{Audibert}. A complete binary tree is a special case of a direct graph, but it more constructive to use the following recursive definition.

A complete (rooted) binary tree $T$ is defined as a nonempty finite set of elements, called \textit{nodes}, with a distinguished node $r(T)$, called the tree \textit{root},  such that either (i) $T$ consists of a single root (the \textit{trivial tree}), or (ii) $T$ contains except for the root an unordered pair of disjoint binary trees $T_1$ and $T_2$. Two complete binary trees are equivalent (or equal) if they are isomorphic as directed graphs.

In what follows by a slight abuse of terminology we say a `binary tree' instead of  a `complete binary tree'.

According to the above definition, a binary tree is a rooted tree $T$ where each of the nodes has either no successor,  or two successors,  in each case the nodes are said to have degree 0 or 2. Nodes of degree 0 are exactly the nodes with no children; they are called \textit{leaves}. Every node (excluding a root) in a tree is connected by a directed edge from exactly one other node; this node is called a parent.  The topmost node in the tree is  the tree root.
We denote
$$
\partial T=\{\text{the set of leaves of $T$}\}.
$$
Then the number of leaves is called the \textit{degree of a binary tree}, denoted by $\deg T$:
$$
\deg T=\mathrm{card}\,\partial T.
$$
The trivial tree is the only binary tree of degree $1$.

According to the definition, any \textit{nontrivial}  binary tree consists of the tree root $r(T)$ and an unordered pair of two binary trees $T_1$ and $T_2$. We write this as follows:
$$
T=T_1\circ T_2=T_2\circ T_1.
$$
 In the converse direction, given two binary trees $T_1$ and $T_2$, let $T_1\circ T_2$ denote the binary tree obtained by joining of $T_1$ and $T_2$ with a new root being a parent to the two roots of $T_1$ and $T_2$.

A tree $T'$ is  a subtree of $T$, or $T'\le T$, if either $T'=T$ or there exists a sequence $(T_{i})_{0\le i\le 2k}$ with $T_0=T$ and $T_{2k}=T'$ such that
$$
T_0=T_1\circ T_2, \quad T_2=T_3\circ T_4, \ldots, \quad T_{2k-2}=T_{2k-1}\circ T_{2k}.
$$

Thus defined product  is obviously commutative. Any binary tree can be written as a nonassociative monomial of tree's leaves. It is easy to see that
\begin{equation}\label{productT}
\partial(T_1\circ T_2)=\partial T_1\sqcup \partial T_2
\end{equation}

A \textit{labeling} on a binary tree  $T$ is a  map
$$
f:\partial T\to X,
$$
where $X$ is an arbitrary set. Two labelings  are equal if they are equal as maps. A \textit{labeled tree }is a pair $(T,f)$, where $f$ is a labeling. Sometimes we write $(T,f)$ as $T$ if $f$ is clear from the context. If $T'\le T$ then the restriction $f|_{T'}$ is defined in an obvious way.

The simplest labeling is a \textit{constant labeling}:
$$
i_x:\partial T\to x, \quad x\in X.
$$

A constant labeling is a particular case of a \textit{dichotomic labeling}, i.e. a labeling $f$ such that $$
f(\partial T)\subset \{x,y\},
$$
where $x,y$ are arbitrary (maybe equal) elements of $X$.
The cardinality of the preimage $f^{-1}(x)$ is called the \textit{multiplicity} of the dichotomic labeling at $x$. Some examples of dichotomic labelings are given  in Fig.~\ref{fig:linear}.

\begin{definition}\label{def:pi}
Given a labeled tree $T=(T,f)$, the \textit{root product value}
$$
\pi:T\to N(X)
$$
is the nonassociative monomial of the free magma $X$ uniquely determined by
\begin{itemize}
\item
if $T$ is trivial then $\pi(T,f)=f(r(T))$,
\item
if $T=T_1\circ T_2$ then $\pi(T,f)=\pi(T_1,f|_{T_1})\,\pi(T_2,f|_{T_2})$.
\end{itemize}
\end{definition}

Some examples of the root product values are given in diagrams a) and b) in Fig.~\ref{fig:rootvalue}.
\begin{figure}

\begin{tikzpicture}[level distance=1cm,
  level 1/.style={sibling distance=2cm},
  level 2/.style={sibling distance=1cm}]
  \node[] {$(x_1x_2)(x_3x_4)$}
    child {node {$x_1x_2$}
        child {node {$x_1$}}
        child {node {$x_2$}}
    }
     child {node {$x_3x_4$}
        child {node (A) {$x_3$}}
        child {node {$x_4$}}
     };
   \node[yshift=5mm,below= of A]  {a)};
\end{tikzpicture}
\qquad
\begin{tikzpicture}[level distance=1cm,
  level 1/.style={sibling distance=2cm},
  level 2/.style={sibling distance=1cm}]
  \node[style={circle}] {$x^2x^2$}
    child {node {$x^2$}
        child {node {$x$}}
        child {node {$x$}}
    }
     child {node {$x^2$}
        child {node (B) {$x$}}
        child {node {$x$}}
     };

     \node[yshift=5mm,below= of B]  {b)};


\end{tikzpicture}
\qquad
\begin{tikzpicture}[level distance=1cm,
  level 1/.style={sibling distance=2cm},
  level 2/.style={sibling distance=1cm}]
  \node[] {$q^2(p_1+p_2)+qp_3$}
    child {node {$q(p_1+p_2)$}
      child {node {$p_1$}}
      child {node(C) { $p_2$}}
    }
    child {node{$p_3$}
    };
     \node[yshift=5mm,below= of C]  {c)};
\end{tikzpicture}
\caption{ }\label{fig:rootvalue}
\end{figure}

If $i_x$ is a constant labeling then
\begin{equation}\label{holds1}
\pi(T,i_x)=x^\alpha\in N(\{x\}).
\end{equation}
In the converse direction, given an element $x^\alpha\in N(\{x\})$ there exists a unique complete binary tree $T$ such that \eqref{holds1} holds; we denote it by $\Tree{z^\alpha}$. In other words,
\begin{equation}\label{xxx}
\pi(\Tree{z^\alpha},i_x)=x^\alpha.
\end{equation}
In this setting, the number of leaves of the monomial tree $\Tree{z^\alpha}$ is exactly the degree of the monomial:
\begin{equation}\label{degeq}
\deg \Tree{z^\alpha}=\deg z^\alpha.
\end{equation}

An elementary but important corollary of \eqref{xxx} is that for any idempotent $c$
\begin{equation}\label{ccc}
\pi(\Tree{z^\alpha},i_c)=c^\alpha=c.
\end{equation}

The symmetric group $\mathrm{Sym}(\partial T)$ of permutations of the tree leaves acts naturally on labelings:
$$
f^\sigma(t):=f\circ \sigma(t), \quad t\in \partial T.
$$
The \textit{total root product value} is the element of the free commutative nonassociative algebra $\Field (X)$ obtained by summing up all possible permutations of a given labeling, i.e.
$$
\bar{\pi}(T,f)=\sum_{\sigma\in \mathrm{Sym}(\partial T)}\pi(T,f^\sigma)\in \Field(X).
$$

For example,
\begin{equation}\label{m!}
\bar{\pi}(\Tree{z^\alpha}, i_x)=m!\, x^\alpha, \qquad m=\deg z^\alpha.
\end{equation}

The above terminology becomes clear from the following key observation which proof  is an easy corollary of the definitions.

\begin{proposition}
 \label{pro:linearization}
Let $A$ be a commutative nonassociative algebra over $\Field$, and let  $x,y\in A$. Let $z^\alpha\in N(\{z\})$ be an nonassociative monomial, $m=\deg z^\alpha$. Then for any $t\in \Field$
\begin{equation}\label{sumlinear}
(x+ty)^\alpha=x^\alpha+tD(z^\alpha;x,y)+ t^2D^2(z^\alpha;x,y)+\ldots+ t^mD^m(z^\alpha;x,y),
\end{equation}
where
\begin{equation}\label{Ddef}
D^k(z^\alpha;x,y)=\frac{1}{k!(m-k)!}\bar{\pi}(\Tree{z^\alpha},f_k),
\end{equation}
and $f_k:\partial \Tree{z^\alpha}\to (x,y)$ is a dichotomic labeling of multiplicity $k$ at $y$.
\end{proposition}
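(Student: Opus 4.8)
The plan is to prove Proposition~\ref{pro:linearization} by induction on the degree $m=\deg z^\alpha$, exploiting the recursive definition of the monomial tree $\Tree{z^\alpha}$ together with the multiplicativity of the root product value $\pi$ from Definition~\ref{def:pi}. The base case $m=1$ is immediate: $\Tree{z}$ is the trivial tree, $(x+ty)^\alpha=x+ty$, and \eqref{Ddef} reduces to $D^0=x$, $D^1=y$, which matches $\bar\pi$ of the trivial tree under the two possible dichotomic labelings. For the inductive step, write $z^\alpha=z^\beta z^\gamma$ with $\deg z^\beta=p$, $\deg z^\gamma=q$, $p+q=m$, so that $\Tree{z^\alpha}=\Tree{z^\beta}\circ\Tree{z^\gamma}$ and $\partial\Tree{z^\alpha}=\partial\Tree{z^\beta}\sqcup\partial\Tree{z^\gamma}$ by \eqref{productT}. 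Applying the induction hypothesis to $z^\beta$ and $z^\gamma$ and multiplying out,
$$
(x+ty)^\alpha=(x+ty)^\beta(x+ty)^\gamma=\sum_{i=0}^{p}\sum_{j=0}^{q}t^{i+j}D^i(z^\beta;x,y)\,D^j(z^\gamma;x,y),
$$
so the coefficient of $t^k$ is $\sum_{i+j=k}D^i(z^\beta;x,y)D^j(z^\gamma;x,y)$, and it remains to identify this sum with $\frac{1}{k!(m-k)!}\bar\pi(\Tree{z^\alpha},f_k)$.

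The heart of the argument is therefore a combinatorial bookkeeping lemma about how $\bar\pi$ of a composite tree decomposes. By the multiplicativity in Definition~\ref{def:pi}, for any labeling $f$ of $\Tree{z^\alpha}$ one has $\pi(\Tree{z^\alpha},f)=\pi(\Tree{z^\beta},f|_{\Tree{z^\beta}})\,\pi(\Tree{z^\gamma},f|_{\Tree{z^\gamma}})$. When we sum over all of $\mathrm{Sym}(\partial\Tree{z^\alpha})$ with $f=f_k$ the dichotomic labeling of multiplicity $k$ at $y$, every permutation distributes the $k$ leaves labeled $y$ into some $i$ of them landing in $\partial\Tree{z^\beta}$ and $j=k-i$ landing in $\partial\Tree{z^\gamma}$. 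Grouping the $m!$ permutations according to this split and according to which $i$-subset of $\partial\Tree{z^\beta}$ and $j$-subset of $\partial\Tree{z^\gamma}$ receive the $y$'s, one gets
$$
\bar\pi(\Tree{z^\alpha},f_k)=\sum_{i+j=k}\binom{p}{i}i!(p-i)!\binom{q}{j}j!(q-j)!\;\frac{1}{?}\,\bar\pi(\Tree{z^\beta},f_i)\,\bar\pi(\Tree{z^\gamma},f_j),
$$
and after tracking the multinomial factor $\binom{m}{p}$ coming from interleaving the two leaf-sets, the constants collapse exactly to $\frac{k!(m-k)!}{i!(k-i)!(m-k)!}\cdot\frac{p!\,q!}{p!\,q!}$ style identities. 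Concretely, the clean way to phrase it is: $\frac{1}{k!(m-k)!}\bar\pi(\Tree{z^\alpha},f_k)=\sum_{i+j=k}\bigl(\frac{1}{i!(p-i)!}\bar\pi(\Tree{z^\beta},f_i)\bigr)\bigl(\frac{1}{j!(q-j)!}\bar\pi(\Tree{z^\gamma},f_j)\bigr)$, which is precisely $\sum_{i+j=k}D^i(z^\beta;x,y)D^j(z^\gamma;x,y)$ and finishes the induction.

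The main obstacle I anticipate is purely in the second paragraph: getting the binomial/multinomial normalisation right, because the symmetric group acting on $\partial\Tree{z^\alpha}$ does not split as a product of the symmetric groups on $\partial\Tree{z^\beta}$ and $\partial\Tree{z^\gamma}$ — one must also account for the $\binom{m}{p}$ ways of choosing which abstract leaf positions belong to each side, and simultaneously for the fact that $\bar\pi$ already over-counts by the stabiliser of the labeling. The safest route is to avoid ad hoc counting and instead note that $\bar\pi(T,f)=\sum_\sigma\pi(T,f^\sigma)$ can be reorganised as a sum over the distinct labelings $g$ in the orbit of $f$, each weighted by $|\mathrm{Stab}(g)|=\prod(\text{leaf-class sizes})!$; for a dichotomic labeling of multiplicity $k$ on $m$ leaves this weight is $k!(m-k)!$, which is exactly why that factor appears in \eqref{Ddef} and why it cancels cleanly. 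With that reformulation the identity $\frac{1}{k!(m-k)!}\bar\pi(\Tree{z^\alpha},f_k)=\sum_{i+j=k}D^i(z^\beta)D^j(z^\gamma)$ becomes the trivial statement that a dichotomic labeling of $\partial\Tree{z^\beta}\sqcup\partial\Tree{z^\gamma}$ with $k$ $y$-leaves is the same data as a pair of dichotomic labelings with $i$ and $k-i$ $y$-leaves respectively, summed over $i$. Everything else — the expansion of the product, the collection of powers of $t$ — is routine.
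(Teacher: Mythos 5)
Your proof is correct. The paper itself gives no argument here (it dismisses the statement as ``an easy corollary of the definitions''), so there is nothing to compare against; your induction on $\deg z^\alpha$ via $z^\alpha=z^\beta z^\gamma$ and $\partial\Tree{z^\alpha}=\partial\Tree{z^\beta}\sqcup\partial\Tree{z^\gamma}$ is the natural way to fill that gap. Note that the clean reformulation you reach in your final paragraph --- namely $\frac{1}{k!(m-k)!}\bar{\pi}(\Tree{z^\alpha},f_k)=\sum_g\pi(\Tree{z^\alpha},g)$, the sum running over the $\binom{m}{k}$ distinct dichotomic labelings of multiplicity $k$, because each orbit element is hit by exactly $|\mathrm{Stab}(g)|=k!(m-k)!$ permutations --- is precisely the content of the paper's next displayed identity \eqref{Ddef1}; once you adopt it, the inductive step is the trivial bijection between a $k$-fold dichotomic labeling of the disjoint union and a pair of labelings of multiplicities $i$ and $k-i$, and your muddled second paragraph (with its unresolved ``$?$'' normalisation) should simply be deleted in favour of it.
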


Thus defined  expression $D^k(z^\alpha;x,y)$ is called the linearization of $z^\alpha$ of order $k$ evaluated at $x$ along $y$. In the associative setting, the linearization coincides with the classical directional derivative of the monomial $z^\alpha=z^m$.

\begin{figure}

\begin{tikzpicture}[level distance=1cm,
  level 1/.style={sibling distance=2cm},
  level 2/.style={sibling distance=1cm}]
  \node[style={circle}] {$x(xy)$}
    child {node {$yx$}
      child {node {$y$}}
      child {node {$x$}}
    }
    child {node {$x$}
    };
\end{tikzpicture}\qquad\quad
\begin{tikzpicture}[level distance=1cm,
  level 1/.style={sibling distance=2cm},
  level 2/.style={sibling distance=1cm}]
  \node[style={circle}] {$x(xy)$}
    child {node {$yx$}
      child {node {$x$}}
      child {node {$y$}}
    }
    child {node {$x$}
    };
\end{tikzpicture}
\qquad\quad
\begin{tikzpicture}[level distance=1cm,
  level 1/.style={sibling distance=2cm},
  level 2/.style={sibling distance=1cm}]
  \node[style={circle}] {$yx^2$}
    child {node {$x^2$}
      child {node {$x$}}
      child {node {$x$}}
    }
    child {node {$y$}
    };
\end{tikzpicture}
\caption{The first order linearization terms of $\Tree{x^3}$}
\label{fig:linear}
\end{figure}

\begin{example}\label{ex1}
Let us consider the linearization of the nonassociative monomial $z^3$. From the algebraic point of view we have
$$
(x+t y)^3= x^3+(x^2y+2x(xy))t+(xy^2+2y(yx))t^2+y^3t^3,
$$
therefore we find by identification
\begin{align}
D^1(z^3;x,y)&=x^2y+2x(xy)=(L_{x^2}+2L_x^2)y,\label{llll}\\
D^2(z^3;x,y)&=xy^2+2y(yx),\nonumber\\
D^3(z^3;x,y)&=y^3\nonumber.
\end{align}
Alternatively, the linearization of the first order can be obtained  by considering the binary tree
$$
\Tree{x^3}=\Tree{x}\circ \Tree{x^2}=\Tree{x}\circ (\Tree{x}\circ \Tree{x})
$$
of degree $3=\deg x^3$, where $\Tree{x}$ is a trivial tree labeled by $x$. Then summing up the elementary terms obtained by `relabeling' of each $x$ by $y$ yields
$$
\Tree{y}\circ (\Tree{x}\circ \Tree{x})+
\Tree{y}\circ (\Tree{y}\circ \Tree{x})+
\Tree{x}\circ (\Tree{x}\circ \Tree{y}),
$$
where $+$ here should be understood as the addition in the corresponding free magma algebra on $X=\{\Tree{x},\Tree{y}\}$. See also  the  binary tree diagrams in Figure~\ref{fig:linear}.

\end{example}

Combining \eqref{sumlinear} with \eqref{m!} yields an analogue of the Euler homogeneous function theorem holds:
\begin{equation}\label{Eulerx}
D^k(z^\alpha;x,x)=\frac{\bar{\pi}(\Tree{z^\alpha},i_x)}{k!(m-k)!}=\binom{m}{k} x^\alpha.
\end{equation}

\begin{proposition}
  There holds
\begin{equation}\label{Ddef1}
D^k(z^\alpha;x,y):=\sum_g\pi(\Tree{z^\alpha},g),
\end{equation}
where the sum is taken over all $\binom{m}{k}$ distinct dichotomic labelings $g:\partial \Tree{z^\alpha}\to (x,y)$ of multiplicity $k$ at $y$.
\end{proposition}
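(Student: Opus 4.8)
The plan is to obtain \eqref{Ddef1} directly from Proposition~\ref{pro:linearization}, that is, from the identity $D^k(z^\alpha;x,y)=\frac{1}{k!(m-k)!}\bar\pi(\Tree{z^\alpha},f_k)$ of \eqref{Ddef}, by an orbit-counting argument for the action of the symmetric group $\mathrm{Sym}(\partial T)$, where $T:=\Tree{z^\alpha}$, on dichotomic labelings. Recall that $\mathrm{card}\,\partial T=m$ by \eqref{degeq}, that $\bar\pi(T,f_k)=\sum_{\sigma\in\mathrm{Sym}(\partial T)}\pi(T,f_k^\sigma)$ is a sum of $m!$ monomials, and that $\pi(T,f_k^\sigma)=\pi(T,f_k^{\sigma'})$ whenever $f_k\circ\sigma=f_k\circ\sigma'$ as maps $\partial T\to(x,y)$, since $\pi$ depends only on the labeled tree. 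So the strategy is to regroup the $m!$ summands according to the value of the labeling $f_k\circ\sigma$.

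First I would describe the orbit of $f_k$: since $\mathrm{Sym}(\partial T)$ acts transitively on $k$-element subsets of $\partial T$, and a dichotomic labeling of multiplicity $k$ at $y$ is completely determined by its $y$-preimage, the orbit $\{f_k\circ\sigma:\sigma\in\mathrm{Sym}(\partial T)\}$ is exactly the set of all $\binom{m}{k}$ dichotomic labelings $g:\partial T\to(x,y)$ of multiplicity $k$ at $y$. Next I would compute the stabilizer $\mathrm{Stab}(f_k)=\{\tau\in\mathrm{Sym}(\partial T):f_k\circ\tau=f_k\}$: a permutation fixes $f_k$ if and only if it preserves the partition $\partial T=f_k^{-1}(x)\sqcup f_k^{-1}(y)$, so $\mathrm{Stab}(f_k)\cong\mathrm{Sym}(f_k^{-1}(x))\times\mathrm{Sym}(f_k^{-1}(y))$ and $|\mathrm{Stab}(f_k)|=k!(m-k)!$. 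Consequently, for each $g$ in the orbit the fiber $\{\sigma:f_k\circ\sigma=g\}$ is a coset of $\mathrm{Stab}(f_k)$, hence has cardinality $k!(m-k)!$; the orbit--stabilizer bookkeeping is consistent, $m!=k!(m-k)!\binom{m}{k}$.

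Assembling these facts gives
$$
\bar\pi(T,f_k)=\sum_{\sigma\in\mathrm{Sym}(\partial T)}\pi(T,f_k\circ\sigma)=k!(m-k)!\sum_{g}\pi(T,g),
$$
where the last sum runs over all $\binom{m}{k}$ distinct dichotomic labelings $g:\partial T\to(x,y)$ of multiplicity $k$ at $y$; combining this with \eqref{Ddef} yields \eqref{Ddef1}. I do not expect a genuine obstacle here, the content being pure counting; the only point that needs a moment's care is the identification of the stabilizer, equivalently the observation that the map $\sigma\mapsto f_k\circ\sigma$ has all its nonempty fibers of the same size $k!(m-k)!$. As a sanity check one may specialize $x=y$: all labelings $g$ collapse to the single constant labeling, the right-hand side becomes $\binom{m}{k}x^\alpha$, and one recovers the Euler-type identity \eqref{Eulerx}.
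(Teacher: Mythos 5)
Your proof is correct and is essentially the argument the paper intends: the paper's own (one-line) proof just notes that there are $\binom{m}{k}$ distinct dichotomic labelings of multiplicity $k$ at $y$ and that \eqref{Ddef} then implies \eqref{Ddef1}, leaving implicit exactly the orbit--stabilizer bookkeeping ($m!=k!(m-k)!\binom{m}{k}$, each labeling hit by $k!(m-k)!$ permutations) that you spell out. Your version is a more careful writeup of the same counting argument, including the useful sanity check against \eqref{Eulerx}.
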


\begin{proof}
For there are exactly $\binom{m}{k}$ distinct dichotomic labelings of multiplicity $k$ at $y$, hence \eqref{Ddef} implies \eqref{Ddef1}.
\end{proof}

If $k=1$ then $y\to D^1(x^\alpha;y)$ is a linear endomorphism of the algebra $A$ written as a sum of certain compositions of operators $L_{x^\beta}$, see \eqref{llll} and some explicit representations the second row in Table~\ref{tab1}.

Finally, we mention that the \textit{full linearization} of $x^\alpha$ is defined by
$$
D(z^\alpha;y_1,\ldots,y_m)=\bar{\pi}(\Tree{z^\alpha},f),
$$
where $f$ is a bijective labeling $f:\partial T\to \{y_1,\ldots,y_m\}$, where $m=\deg z^\alpha$. Then $D(z^\alpha;y_1,\ldots,y_m)$ is symmetric $m$-linear form in $y$'s:
$$
D(z^\alpha;y_{i_1},\ldots,y_{i_m})=D(z^\alpha;y_1,\ldots,y_m)
$$
for any permutation $i\in \mathrm{Sym}(1,2,\ldots,m)$. It follows from \eqref{Ddef} that
\begin{equation}\label{Dxy}
D(z^\alpha;\underbrace{x,\ldots,x}_{m-k},\underbrace{y,\ldots,y}_{k})=
k!(m-k)!D^k(z^\alpha;x,y).
\end{equation}

\subsection{Linearizations of polynomial maps}\label{sec:linear}

A map  $\phi(x):A\to \Field$ on an algebra over $\Field$ is called a polynomial map if
\begin{equation}\label{linear}
\phi(x+ty)=\phi(x)+\sum_{i=1}^{m}t^i\delta_i\phi(x;y)
\end{equation}
is a polynomial in $t\in \Field$ of a fixed degree (independent on $t$).
Then each coefficient $\delta_i\phi(x;y)$ is a polynomial map of any of the variables $x$ and $y$; it is homogeneous of degree in $m-i$ in $x$ and degree $i$ in $y$, respectively. A simple argument reveals that the Euler homogeneous function theorem holds, i.e. the substitution $y=x$ in the linearization restores the polynomial $\phi$ (up to a constant factor independent of $\phi$):
\begin{equation}\label{Euler}
\delta_i\phi(x;x)=\binom{m}{i}\phi(x).
\end{equation}

 The polynomial map $y\to \delta_k\phi(x;y)$ is called the \textit{linearization} of $\phi(x)$ of order $k$.

As before, we shall primarily concern ourselves with the linearizations of order 1 and 2. The second coefficient in \eqref{linear}, $\delta_2\phi(x;y)$, is quadratic with respect to $y$, thus can further be polarized to get a bilinear form $\delta_2\phi(x;y,z)$ in $y,z$ defined by
\begin{equation}\label{delta2}
\delta_2\phi(x;y,z)=\delta_2\phi(x;y+z)-\delta_2\phi(x;y)-\delta_2\phi(x;z)
\end{equation}
such that the Euler homogeneous function theorem yields
\begin{equation}\label{linear2}
\delta_2\phi(x;y,y)=2\delta_2\phi(x;y).
\end{equation}

\section{The Peirce operator} \label{sec:Peirce}

We begin with the monomial case $P=z^\alpha$ and then proceed with a general $P$ in the next sections.

\begin{definition}\label{def:omega}
Let $\Modul$ be a left module over $\Field$. Given  $q\in \Field$ and  a labeled binary tree $(T,f)$, where $f:\partial T\to \Modul$, we define the \textit{Peirce operator}
$$
\,\varrho:(T,f)\to \,\varrho(T,f,q),
$$
uniquely determined by the following conditions:
\begin{itemize}
\item
if $T$ is trivial then $\,\varrho(T,f,q)=f(r(T))$,
\item
if $T=T_1\circ T_2$ then $\,\varrho(T,f,q)=q(\,\varrho(T_1,f|_{T_1},q)+\,\varrho(T_2,f|_{T_2},q))$.
\end{itemize}
The total Peirce operator is defined by
$$
\Omega(T,f,q)=\sum_{\sigma\in \mathrm{Sym}(\partial T)}\,\varrho(T,f^\sigma,q).
$$
\end{definition}

See an explicit example of the Peirce operator for $T=\Tree{z^3}$  in the diagram c), Fig.~\ref{fig:rootvalue}.

In what follows, we always consider the case  $\Modul=\Field$. Then  $\,\varrho(T,q)$ is a polynomial in $q$ with coefficients in $\mathbb{Z}[f(\partial T)]$. An important  specialization is when
$f=i_1$ is the constant labeling sending all leaves of $T$ to the field unit $1$, namely:
$$
\,\varrho(T,q):=\,\varrho(T, i_1,q)\in \mathbb{Z}[q].
$$

If $T$ is a monomial tree, the polynomial $\,\varrho(\Tree{z^\alpha},q)$ is called the \textit{Peirce polynomial} of $z^\alpha$ and denoted for short by
$$
\,\varrho(z^\alpha,q):=\,\varrho(\Tree{z^\alpha},q).
$$
In particular,
\begin{equation}\label{omega1}
\begin{split}
\,\varrho (z,q)&=1,\\
\,\varrho (z^2,q)&=2q,\\
\,\varrho (z^3,q)&=2q^2+q,
\end{split}
\end{equation}
and in general
\begin{equation}\label{log1}
\,\varrho(z^\alpha z^\beta,q)=q\bigl(\,\varrho(z^\alpha,q)+\,\varrho(z^\beta,q)\bigr)
\end{equation}
See also some explicit examples in Table~\ref{tab1}. The following identities  will be used during the proof of the main results.

\begin{proposition}\label{pro:half}
For any monomial $z^\alpha$ there holds:
\begin{align}
\varrho(z^\alpha,1)&=\deg z^\alpha,\nonumber\\
\varrho(z^\alpha,\half12)&=1, \nonumber\\
\varrho(z^\alpha,0)&=
\left\{
  \begin{array}{ll}
   1&\deg z^\alpha=1\\
0&\deg z^\alpha\ge 2.
  \end{array}
\right.\nonumber
\end{align}
\end{proposition}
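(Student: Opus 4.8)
The plan is to prove all three identities by induction on the structure of the monomial tree $\Tree{z^\alpha}$, using the defining recursion \eqref{log1}, i.e. $\varrho(z^\alpha z^\beta,q)=q(\varrho(z^\alpha,q)+\varrho(z^\beta,q))$, together with the base case $\varrho(z,q)=1$ from \eqref{omega1}. Each of the three claims will follow by specializing $q$ to $1$, $\tfrac12$, and $0$ respectively, so the whole proposition reduces to three short inductions on $\deg z^\alpha$.

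For the first identity, I would argue: if $z^\alpha=z$ has degree $1$, then $\varrho(z,1)=1=\deg z$. If $z^\alpha=z^\beta z^\gamma$, then by \eqref{log1}, $\varrho(z^\alpha,1)=\varrho(z^\beta,1)+\varrho(z^\gamma,1)$, which by the inductive hypothesis equals $\deg z^\beta+\deg z^\gamma=\deg z^\alpha$, since degree is additive under the tree product (cf. \eqref{productT}, \eqref{degeq}). For the second identity, the base case is $\varrho(z,\tfrac12)=1$, and for a product $\varrho(z^\beta z^\gamma,\tfrac12)=\tfrac12(\varrho(z^\beta,\tfrac12)+\varrho(z^\gamma,\tfrac12))=\tfrac12(1+1)=1$ by induction; this is the ``fixed point'' computation that makes $\tfrac12$ special. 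For the third identity, the base case gives $\varrho(z,0)=1$, and for any product $\varrho(z^\beta z^\gamma,0)=0\cdot(\cdots)=0$ regardless of the values of the factors, which handles all monomials of degree $\ge2$ at once (no induction even needed beyond observing that a degree $\ge2$ monomial is a nontrivial product).

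The induction is legitimate because every nontrivial binary tree is of the form $T_1\circ T_2$ with $T_1,T_2$ of strictly smaller degree, and the recursion in \eqref{log1} is exactly the image under $\pi(\cdot,i_1)$ of the recursive clause in Definition~\ref{def:omega}; I would note this explicitly so the reader sees that $\varrho(z^\alpha,q)$ is well-defined by the monomial tree and the three formulas are evaluations of the same polynomial. I do not anticipate a genuine obstacle here — the statement is essentially a bookkeeping lemma. The only point requiring a word of care is that the decomposition $z^\alpha=z^\beta z^\gamma$ of a nonassociative monomial into its two immediate factors is unique (it corresponds to the unordered pair $\{T_1,T_2\}$ at the root), so the recursion \eqref{log1} unambiguously determines $\varrho(z^\alpha,q)$; once that is granted, all three identities drop out immediately.
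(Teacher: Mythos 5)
Your proposal is correct and follows essentially the same route as the paper: induction on the degree of the monomial via the decomposition $z^\alpha=z^\beta z^\gamma$, the recursion \eqref{log1}, and the base case $\varrho(z,q)=1$, with the three identities obtained by specializing $q$ to $1$, $\tfrac12$, and $0$. The extra remarks on well-definedness of the recursion are sound but not needed beyond what the paper already records.
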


\begin{proof}
The proof of all identities is by induction by the degree $m=\deg z^\alpha$. When $m=1$ the statement is trivial. Assume that the statement is true for any $k$, $1\le k\le m$, where $m\ge2$. Consider an arbitrary monomial  $z^\alpha$ of degree $m+1$ and decompose it in a proper product $z^\beta z^\gamma$. Then by \eqref{log1},
$$
\,\varrho(z^\beta z^\gamma,q)=q\bigl(\,\varrho(z^\beta,q)+\,\varrho(z^\gamma,q)\bigr)
$$
hence we have by the induction assumptions
\begin{align*}
\,\varrho(z^\beta z^\gamma,1)&=1\cdot (\,\varrho(z^\beta,1)+\,\varrho(z^\gamma,1))=\deg z^\beta+\deg z^\gamma=\deg z^\alpha,\\
\,\varrho(z^\beta z^\gamma,\half12)&=\half12\cdot \left(\,\varrho(z^\beta,\half12)+\,\varrho(z^\gamma,\half12)\right) =\half12(1+1)=1,\\
\,\varrho(z^\beta z^\gamma,0)&=0\cdot(\,\varrho(z^\beta,1)+\,\varrho(z^\gamma,1))=0,
\end{align*}
the proposition follows.
\end{proof}

\begin{remark}
 It is an easy corollary of \eqref{log1} that  nonzero coefficients of $\,\varrho(z^\alpha,q)$ are always positive and are powers of $2$. Proposition~\ref{pro:half}  shows that the Peirce polynomials share  nice properties of $z^\alpha$. It would be interesting to understand the algebraic properties of Peirce polynomials better.
\end{remark}

Our next step is to connect the Peirce operator $\,\varrho$ to the first order linearizations. First note that the operator $y\to D^1(z^\alpha;c,y )$ is a linear endomorphism of $A.$ We show that it is actually a polynomial in $L_c$. The identity \eqref{PeirceD1} below takes the central place in the Peirce theory, stating that the linearization operator $D^1(z^\alpha;c,\cdot)$ evaluated at an idempotent $c$ has the same eigenvectors as $L_c$ and its eigenvalues can be explicitly evaluated by the Peirce polynomial at the corresponding eigenvalues of $L_c$.

\begin{proposition}
Let $c$ be an idempotent of an algebra $A$ over $\Field$. Then
\begin{equation}\label{PeirceD}
D^1(z^\alpha;c,y )=\,\varrho(z^\alpha,L_c)\,y.
\end{equation}
In particular, if $y\in A$ is such that $cy=\lambda y$, $\lambda\in \Field$, then
\begin{equation}\label{PeirceD1}
D^1(z^\alpha;c,y )=\,\varrho(z^\alpha,\lambda)\,y.
\end{equation}
\end{proposition}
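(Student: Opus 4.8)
The plan is to prove the identity \eqref{PeirceD} by induction on $m=\deg z^\alpha$, using that the two sides obey parallel recursions under the tree decomposition $\Tree{z^\alpha}=\Tree{z^\beta}\circ\Tree{z^\gamma}$. For the base case $m=1$ the monomial is $z^\alpha=z$, the tree $\Tree{z}$ is trivial, and by \eqref{Ddef1} the unique multiplicity-one dichotomic labeling sends the single leaf to $y$, so $D^1(z;c,y)=y$; since $\varrho(z,q)=1$ by \eqref{omega1}, we also get $\varrho(z,L_c)y=y$.

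For the inductive step I would first establish a product rule for the first-order linearization: if $z^\alpha=z^\beta z^\gamma$ is a proper product, then
\[
D^1(z^\alpha;x,y)=D^1(z^\beta;x,y)\,x^\gamma+x^\beta\,D^1(z^\gamma;x,y).
\]
This is a direct consequence of \eqref{Ddef1} together with $\partial\Tree{z^\alpha}=\partial\Tree{z^\beta}\sqcup\partial\Tree{z^\gamma}$: a dichotomic labeling $g$ of multiplicity one at $y$ places its unique $y$-leaf either in $\partial\Tree{z^\beta}$ or in $\partial\Tree{z^\gamma}$; summing $\pi(\Tree{z^\alpha},g)=\pi(\Tree{z^\beta},g|_{\Tree{z^\beta}})\,\pi(\Tree{z^\gamma},g|_{\Tree{z^\gamma}})$ over the first family gives $D^1(z^\beta;x,y)\,x^\gamma$ (the restriction to $\Tree{z^\gamma}$ being the constant labeling $i_x$, with $\pi(\Tree{z^\gamma},i_x)=x^\gamma$), and over the second family gives $x^\beta\,D^1(z^\gamma;x,y)$. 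Equivalently, one reads this off the coefficient of $t^1$ in the product $(x+ty)^\beta(x+ty)^\gamma$ via \eqref{sumlinear}.

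Now I would specialize $x=c$. By \eqref{ccc} we have $c^\beta=c^\gamma=c$, so the product rule becomes
\[
D^1(z^\alpha;c,y)=L_c\bigl(D^1(z^\beta;c,y)+D^1(z^\gamma;c,y)\bigr).
\]
Applying the induction hypothesis to $z^\beta$ and $z^\gamma$, and then the Peirce recursion \eqref{log1} — legitimate after the substitution $q\mapsto L_c$, since $\Field[L_c]$ is a commutative ring — yields
\[
D^1(z^\alpha;c,y)=L_c\bigl(\varrho(z^\beta,L_c)+\varrho(z^\gamma,L_c)\bigr)y=\varrho(z^\beta z^\gamma,L_c)\,y=\varrho(z^\alpha,L_c)\,y,
\]
completing the induction. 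The particular case \eqref{PeirceD1} is then immediate: if $cy=\lambda y$ then $y$ is an eigenvector of every power of $L_c$, hence $\varrho(z^\alpha,L_c)y=\varrho(z^\alpha,\lambda)y$. I do not anticipate a genuine obstacle; the only point needing attention is the bookkeeping in the product rule when $\beta=\gamma$ (e.g. for $z^2z^2$), but this causes no trouble: the leaves of the two subtrees are distinct nodes, so the two families of labelings are disjoint and their contributions simply add, matching the coefficient $2$ produced by \eqref{log1} when $\varrho(z^\beta,q)=\varrho(z^\gamma,q)$.
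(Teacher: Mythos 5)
Your proof is correct and follows essentially the same route as the paper: induction on $\deg z^\alpha$, splitting the multiplicity-one dichotomic labelings of $\Tree{z^\beta z^\gamma}$ according to which subtree contains the $y$-leaf, using $c^\beta=c^\gamma=c$ to pull out $L_c$, and concluding via the recursion \eqref{log1}. Stating the general product rule for $D^1$ before specializing $x=c$ is a minor (and harmless) reorganization of the paper's argument, not a different method.
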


\begin{proof}
The proof is by induction by $m=\deg z^\alpha$. If $m=1$ then a  dichotomic labeling of order $1$ in \eqref{Ddef} is constant: $f_1=i_y$, hence
$$
D^1(z;c,y )=\pi(\Tree{z},f_1)=\pi(\Tree{z},i_y)=i_y(r(T))=y,
$$
and on the other hand by \eqref{omega1},
$$
\,\varrho(z,q)=1 \quad \Rightarrow \quad \,\varrho(z,L_c)y=y,
$$
hence \eqref{PeirceD} follows.
Next, assume that \eqref{PeirceD} is valid for all monomials of degree less or equal  $m-1\ge 1$. Consider $z^\alpha$ of degree $m$ and write it as a proper product $z^\alpha=z^\beta z^\gamma$, where $\deg z^\beta=k<m $ and $\deg z^\gamma=m-k<m$. Then by the induction assumption
\begin{equation}\label{indass1}
\begin{split}
 D^1(z^\beta;c,y )&=\,\varrho(z^\beta,L_c)\,y,\\
D^1(z^\gamma;c,y )&=\,\varrho(z^\gamma,L_c)\,y.
\end{split}
\end{equation}
On the other hand,  from \eqref{Ddef1}  and Definition~\ref{def:pi} we have
\begin{align*}
D^1(z^\alpha;c,y)&=\sum_{g\in G_\alpha} \pi(\Tree{z^\alpha},g)=
\sum_{g\in G_\alpha} \pi(\Tree{z^\beta},g)\,\pi(\Tree{z^\gamma},g),
\end{align*}
where $G_\alpha$ is the set of $m=\binom{m}{1}$ distinct dichotomic labelings $g:\partial \Tree{z^\alpha}\to (x,y)$ of order $1$.
Since the preimage  $t:=g^{-1}(y)$ has cardinality one, we have a natural disjoint union
\begin{equation}\label{disjoint}
G_\alpha=G_\beta\sqcup G_\gamma,
\end{equation}
where $g\in G_\beta$ (resp. $g\in G_\gamma$) if and only if $t\in \partial \Tree{z^\beta}$ (resp. $t\in \partial \Tree{z^\gamma}$). If $g\in G_\beta$ then $g|_{G_\gamma}=i_c$ is constant on $\partial \Tree{z^\gamma}$, hence we obtain from \eqref{ccc}:
$$
\pi(\Tree{z^\gamma},g)=\pi(\Tree{x^\gamma},\,i_c)=c^\gamma=c,
$$
therefore
\begin{align*}
\pi(\Tree{z^\beta},g)\,\pi(\Tree{z^\gamma},g)&=
c\cdot \pi(\Tree{z^\beta},g)=L_c\pi(\Tree{z^\beta},g).
\end{align*}
Since \eqref{disjoint} we obtain by virtue of \eqref{Ddef1} and  \eqref{indass1}
\begin{align*}
D^1(z^\alpha;c,y)&=\sum_{g\in G_\beta}L_c \pi(\Tree{z^\beta},g) + \sum_{g\in G_\gamma}L_c \pi(\Tree{z^\gamma},g)\\
&=L_c D^1(z^\beta;c,y )+L_cD^1(z^\gamma;c,y )\\
&=L_c\bigl(\,\varrho(z^\beta,L_c)+\,\varrho(z^\gamma,L_c)\bigr)\,y\\
&=\,\varrho(z^\beta z^\gamma,L_c)y\\
&=\,\varrho(z^\alpha,L_c)y,
\end{align*}
where the last equalities are by virtue of \eqref{log1}. This proves \eqref{PeirceD} by induction. The proof of \eqref{PeirceD1} is obvious.
\end{proof}

The above computations were valid for the constant labeling $i_1$. For a general labeling $f$, it is important to know explicit expressions for the total Peirce polynomial $\Omega(T, f,q)$. The following proposition shows that the general case amounts to the constant one.
\begin{proposition}\label{pro:Pi}
\begin{equation}\label{PeircePi}
\Omega(T,f,q)=(m-1)!\,\trace(f)\,\,\varrho(T,q),
\end{equation}
where the trace of $f$ is defined by
$$
\trace( f)=\sum_{t\in \partial T}f(t).
$$
\end{proposition}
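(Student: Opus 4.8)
The plan is to reduce the statement to an elementary averaging identity over $\mathrm{Sym}(\partial T)$, after first observing that, for a fixed tree $T$ and a fixed value of $q$, the Peirce operator $\varrho(T,\cdot,q)$ is a \emph{linear} function of its labeling.

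First I would establish, by an immediate induction on $m=\deg T$ using the two clauses of Definition~\ref{def:omega}, that there exist polynomials $c_t(T,q)\in\mathbb{Z}[q]$, one for each leaf $t\in\partial T$, such that
\[
\varrho(T,f,q)=\sum_{t\in\partial T}c_t(T,q)\,f(t)
\]
for every left $\Field$-module $\Modul$ and every labeling $f:\partial T\to\Modul$. The base case $m=1$ is trivial ($c_{r(T)}\equiv1$); for $T=T_1\circ T_2$ one has the disjoint decomposition $\partial T=\partial T_1\sqcup\partial T_2$ by \eqref{productT}, and the recursion forces $c_t(T,q)=q\,c_t(T_i,q)$ whenever $t\in\partial T_i$ (in fact $c_t(T,q)=q^{d_T(t)}$ with $d_T(t)$ the depth of $t$, though the precise form will not be needed). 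Specializing to the constant labeling $f=i_1$ then gives $\varrho(T,q)=\sum_{t\in\partial T}c_t(T,q)$.

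Next I would carry out the averaging. Since $f^\sigma(t)=f(\sigma(t))$, the displayed identity yields
\[
\Omega(T,f,q)=\sum_{\sigma\in\mathrm{Sym}(\partial T)}\varrho(T,f^\sigma,q)=\sum_{t\in\partial T}c_t(T,q)\sum_{\sigma\in\mathrm{Sym}(\partial T)}f(\sigma(t)).
\]
For a fixed leaf $t$ and a fixed target leaf $s\in\partial T$ there are exactly $(m-1)!$ permutations $\sigma$ with $\sigma(t)=s$, so the inner sum equals $(m-1)!\sum_{s\in\partial T}f(s)=(m-1)!\,\trace(f)$, independently of $t$. Substituting and using $\sum_{t\in\partial T}c_t(T,q)=\varrho(T,q)$ gives $\Omega(T,f,q)=(m-1)!\,\trace(f)\,\varrho(T,q)$, which is the claim.

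I do not expect a serious obstacle here; the only point requiring a little care is the bookkeeping in the first step, namely that the recursion in Definition~\ref{def:omega} proceeds through the restrictions $f|_{T_i}$ of the labeling, so one must keep track (via \eqref{productT}) of which leaves lie in which subtree before the linearity identity can be asserted and then specialized to $\Modul=\Field$ and $f=i_1$. An alternative is a direct induction on $\deg T$, writing $T=T_1\circ T_2$, splitting $\mathrm{Sym}(\partial T)$ according to how the leaves of $\partial T_1$ and $\partial T_2$ are permuted, and again invoking that every leaf is hit equally often; but the linearity-plus-averaging route is shorter and makes the combinatorial origin of the factor $(m-1)!$ transparent.
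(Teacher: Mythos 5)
Your proof is correct and follows essentially the same route as the paper: both arguments rest on the linearity of $\varrho(T,\cdot,q)$ in the labeling together with the symmetrization over $\mathrm{Sym}(\partial T)$. The only (harmless) difference is that you compute the proportionality constant directly by counting the $(m-1)!$ permutations sending a fixed leaf to a fixed leaf, whereas the paper deduces abstractly that $\Omega(T,f,q)=C\,\trace(f)$ and then determines $C$ by specializing to the constant labeling $i_1$.
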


\begin{proof}
Indeed, it follows from Definition~\ref{def:omega} that $\,\varrho(T,q,f)$, and therefore also $\Omega(T,q,f)$, are linear in $f$. Furthermore, since the total Peirce polynomial $\Omega$ is obtained by symmetrization, one has
$$
\Omega(T,f,q)=C  \trace (f)
$$
for some $C\in \Field$ independent on $f$. To determine $C$ we consider $f=i_1$, then
$$
\Omega(T,i_1,q)=C  \trace( i_1)=C\sum_{t\in \partial T}1=C\deg T=mC.
$$
On the other hand, since $i_1$ is constant we have $\,\varrho(T,i_1^\sigma,q)=\,\varrho(T,i_1,q)$ for all $\sigma\in \mathrm{Sym}(\partial T)$, hence
$$
\Omega(T,i_1,q)=m!\,\,\varrho(T,i_1,q)=m!\, \,\varrho(T,q),
$$
therefore $C=(m-1)!\,\,\varrho(T,q)$, which finishes the proof.
\end{proof}

\section{Second order linearizations}\label{sec:second}
Now we derive explicit expressions for linearizations of the second order evaluated at an idempotent $c\in A$ along $x,y\in A$. We start with \eqref{Ddef1},
\begin{equation}\label{Ddef2}
D^2(z^\alpha;c,y):=\sum_{g}\pi(\Tree{z^\alpha},g),
\end{equation}
where the sum is taken over $\frac{m(m-1)}{2}$ distinct dichotomic labelings $g:\partial \Tree{z^\alpha}\to (c,y)$ of order $2$, $m=\deg z^\alpha$. The map $y\to D^2(z^\alpha;c,y)$ is quadratic and its polarization
\begin{equation}\label{delta1}
D^2(z^\alpha;c,x,y)=D^2(z^\alpha;c,x+y)-D^2(z^\alpha;c,x)-D^2(z^\alpha;c,y)
\end{equation}
is a bilinear form in $x,y$. An easy computation shows that $D^2(z;c,y)=0$ and $D^2(z^2;c,y)=y^2$,
hence
\begin{equation}\label{mm2}\begin{split}
D^2(z;c,x,y)&= 0, \\
D^2(z^2;c,x,y)&=2xy.
\end{split}
\end{equation}
In general, combining  \eqref{Ddef2} with \eqref{delta1} yields
\begin{equation}\label{Ddef3}
D^2(z^\alpha;c,x,y)=\sum_{h\in H(z^\alpha)}\pi(\Tree{z^\alpha},h),
\end{equation}
where $H(z^\alpha)$ denote the set of all distinct labelings $h:\partial \Tree{z^\alpha}\to \{c,x,y\}$, where value $c$ has the multiplicity two. The cardinality of $H(z^\alpha)$ is $m(m-1)$ and is nonzero for $m\ge2$.

Since the cases $m=1$ and $m=2$ are already known, see \eqref{mm2}, we may assume that $m\ge 3$. Under this assumption, let us consider a proper product $z^\alpha=z^\beta z^\gamma$, where $\deg z^\beta=k<m $ and $\deg z^\gamma=m-k<m$.
Rewrite \eqref{Ddef3} as
\begin{equation}\label{Ddef4}
D^2(z^\alpha;c,x,y)=\left(\sum_{\mathrm{a})} +\sum_{\mathrm{b})} +\sum_{\mathrm{c})} +\sum_{\mathrm{d})}\right)\pi(\Tree{z^\alpha},h),
\end{equation}
where $h^{-1}(x)$ and $h^{-1}(y)$ are distributed according to  the four possible cases displayed in the table below.
	\renewcommand{\arraystretch}{1.5}
	\setlength{\tabcolsep}{0.85em}

\begin{table}[h]
\begin{tabular}{c|c|c}

 & $\partial \Tree{z^\beta}$ & $\partial \Tree{z^\gamma}$\\\hline
a)& $h^{-1}(x),h^{-1}(y)$ & \\\hline
b)& $h^{-1}(x)$ & $h^{-1}(y)$  \\\hline
c)& $h^{-1}(y)$ & $h^{-1}(x)$  \\\hline
d)& & $h^{-1}(x),h^{-1}(y)$  \\
\end{tabular}
\end{table}

We have
$$
\pi(\Tree{z^\alpha},h)=\pi(\Tree{z^\beta},h|_{\Tree{z^\beta}})\, \pi(\Tree{z^\gamma},h|_{\Tree{z^\gamma}}).
$$

Then in  the case a) one has $h|_{\Tree{z^\beta}}\in H(z^\beta)$ and $h|_{\Tree{z^\gamma}}$ is a constant labeling:  $h|_{\Tree{z^\gamma}}=i_c$, hence
$$
\pi(\Tree{z^\alpha},h)=L_c\pi(\Tree{z^\beta},h|_{\Tree{z^\beta}}),
$$
therefore
$$
\sum_{\mathrm{a})} \pi(\Tree{z^\alpha},h)=
\sum_{h\in H(z^\beta)} L_c\pi(\Tree{z^\beta},h|_{\Tree{z^\beta}})=
L_c\,D^2(z^\beta;c,x,y).
$$
A similar argument yields
$$
\sum_{\mathrm{d})} \pi(\Tree{z^\alpha},h)=
L_c\,D^2(z^\gamma;c,x,y).
$$
Next, in case b) we have that both $h|_{\Tree{z^\beta}}$ and $h|_{\Tree{z^\gamma}}$ are order one dichotomy labelings $(c,x)$ and $(c,y)$ respectively: $h|_{\Tree{z^\beta}}=i_c$ and $h|_{\Tree{z^\gamma}}=i_c$. Hence using \eqref{PeirceD} we derive
$$
\sum_{\mathrm{b})} \pi(\Tree{z^\alpha},h)=
D^1(z^\beta;c,x ) D^1(z^\gamma;c,y )=(\,\varrho(z^\beta,L_c)x)(\,\varrho(z^\gamma,L_c)y).
$$
Similarly one obtains
$$
\sum_{\mathrm{c})} \pi(\Tree{z^\alpha},h)=
(\,\varrho(z^\beta,L_c)y)(\,\varrho(z^\gamma,L_c)x).
$$
In summary, we have from \eqref{Ddef4} the following recursion:
\begin{equation}\label{Ddef5}
\begin{split}
D^2(z^\alpha;c,x,y)&=L_c\bigl(D^2(z^\beta;c,x,y)+D^2(z^\gamma;c,x,y)\bigr)\\
&+(\,\varrho(z^\beta,L_c)x)(\,\varrho(z^\gamma,L_c)y) +(\,\varrho(z^\beta,L_c)y)(\,\varrho(z^\gamma,L_c)x)
\end{split}
\end{equation}

An important corollay of \eqref{Ddef5} is the following observation.

\begin{proposition}\label{th311}
If $x\in A_c(\lambda)$ and $y\in A_c(\mu)$ then there exists a linear endomorphism $\frak{D}(z^\alpha;\lambda,\mu,L_c)$ of $A$ such that
\begin{equation}\label{xyprod}
D^2(z^\alpha;c,x,y) =\frak{D}(z^\alpha;\lambda,\mu,L_c)xy.
\end{equation}
Here $\frak{D}$ is uniquely determined by
\begin{itemize}
\item[(i)]
$\frak{D}(z;\lambda,\mu,L_c)=0$ and
\item[(ii)]
$
\frak{D}(z^\beta z^\gamma;a,b,L_c)=L_c(\frak{D}(z^\beta;a,b,L_c)+\frak{D}( z^\gamma;a,b,L_c))
+\,\varrho(z^\beta,a)\,\varrho(z^\gamma,b) +\,\varrho(z^\beta,b)\,\varrho(z^\gamma,a).
$
\end{itemize}
\end{proposition}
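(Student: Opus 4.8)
The plan is to establish \eqref{xyprod} by induction on $m=\deg z^\alpha$, using the recursion \eqref{Ddef5} as the engine and defining $\frak{D}$ along the way through the recursion (i)--(ii). For the base of the induction I would invoke \eqref{mm2}: since $D^2(z;c,x,y)=0$ for all $x,y$, I set $\frak{D}(z;\lambda,\mu,L_c):=0$, which is (i); and $D^2(z^2;c,x,y)=2xy$ agrees with the value $\frak{D}(z^2;\lambda,\mu,L_c)=2$ that (ii) produces (recall $\varrho(z,t)\equiv1$ from \eqref{omega1}).

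For the inductive step I take $m\ge3$, pick $x\in A_c(\lambda)$, $y\in A_c(\mu)$, and write $z^\alpha=z^\beta z^\gamma$ with $\deg z^\beta,\deg z^\gamma<m$ — equivalently, the unique decomposition $\Tree{z^\alpha}=\Tree{z^\beta}\circ\Tree{z^\gamma}$ of the monomial tree. In \eqref{Ddef5} the operators $\varrho(z^\beta,L_c)$ and $\varrho(z^\gamma,L_c)$ act on the eigenvectors $x$ and $y$ as the scalars given by \eqref{PeirceD1}, so the two cross-terms collapse, using commutativity $xy=yx$, to
\[
\varrho(z^\beta,\lambda)\varrho(z^\gamma,\mu)\,xy+\varrho(z^\beta,\mu)\varrho(z^\gamma,\lambda)\,yx=\bigl(\varrho(z^\beta,\lambda)\varrho(z^\gamma,\mu)+\varrho(z^\beta,\mu)\varrho(z^\gamma,\lambda)\bigr)\,xy .
\]
By the induction hypothesis applied to the lower-degree factors (with the same eigenvectors $x,y$), $D^2(z^\beta;c,x,y)=\frak{D}(z^\beta;\lambda,\mu,L_c)xy$ and similarly for $z^\gamma$, so \eqref{Ddef5} becomes $D^2(z^\alpha;c,x,y)=\frak{D}(z^\alpha;\lambda,\mu,L_c)xy$ with
\[
\frak{D}(z^\alpha;\lambda,\mu,L_c)=L_c\bigl(\frak{D}(z^\beta;\lambda,\mu,L_c)+\frak{D}(z^\gamma;\lambda,\mu,L_c)\bigr)+\varrho(z^\beta,\lambda)\varrho(z^\gamma,\mu)+\varrho(z^\beta,\mu)\varrho(z^\gamma,\lambda),
\]
which is precisely (ii). Since $\frak{D}(z;\cdot)=0$ and every recursive step only adds $L_c$ times endomorphisms together with a scalar, $\frak{D}(z^\alpha;\lambda,\mu,L_c)$ is a polynomial in $L_c$, hence a genuine linear endomorphism of $A$.

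It remains to note that the recursion is well posed and determines $\frak{D}$ uniquely: for $\deg z^\alpha\ge2$ the tree decomposition $\Tree{z^\alpha}=\Tree{z^\beta}\circ\Tree{z^\gamma}$ is unique up to swapping the two factors, and the right-hand side of (ii) is visibly symmetric in $(z^\beta,z^\gamma)$ (and, incidentally, in $(\lambda,\mu)$, matching the symmetry of the bilinear form $D^2(z^\alpha;c,\cdot,\cdot)$), so (i)--(ii) recursively assign $\frak{D}$ an unambiguous value, and uniqueness of an operator satisfying (i)--(ii) is then immediate. I expect no serious obstacle; the one point that genuinely carries the content of the statement is the collapse of the cross-terms — a priori $D^2(z^\alpha;c,x,y)$ depends on $x$ and $y$ individually, and it is exactly the pairing of \eqref{PeirceD1} (which turns $\varrho(z^\beta,L_c)$ into a scalar on an eigenvector) with commutativity that forces the dependence to factor through the single product $xy$.
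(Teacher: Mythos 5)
Your proposal is correct and follows essentially the same route as the paper: specialize the recursion \eqref{Ddef5} to eigenvectors $x\in A_c(\lambda)$, $y\in A_c(\mu)$ so that \eqref{PeirceD} turns the cross-terms into scalar multiples of $xy$, then induct on $\deg z^\alpha$ with base cases \eqref{mm2}, reading off the recursion (ii). Your added remarks on well-posedness of the recursion (uniqueness of the tree splitting up to swap, and symmetry of the right-hand side of (ii)) are a small but welcome supplement that the paper leaves implicit.
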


\begin{proof}
First note that specializing $x\in A_c(\lambda)$ and $y\in A_c(\mu)$ in \eqref{Ddef5} we obtain
\begin{equation}\label{Ddef6}
\begin{split}
D^2(z^\alpha;c,x,y)&=L_c\bigl(D^2(z^\beta;c,x,y)+D^2(z^\gamma;c,x,y)\bigr)\\
&+(\,\varrho(z^\beta,\lambda)\,\varrho(z^\gamma,\mu) +\,\varrho(z^\beta,\mu)\,\varrho(z^\gamma,\lambda))xy.
\end{split}
\end{equation}
We proceed by induction on the degree of monomial $z^\alpha$. By virtue of \eqref{mm2}, the claim is true when the degree is $\le 2$. Assuming that the claim is valid for all monomials of degree $\le m$, where $m\ge 2$, we consider an arbitrary $z^\alpha$ of degree $m+1$ and decompose it into a proper product $z^\alpha=z^\beta z^\gamma$. Then \eqref{Ddef6} gives
\begin{equation*}
\begin{split}
D^2(z^\alpha;c,x,y)&=L_c\bigl(\frak{D}(z^\beta;\lambda,\mu,L_c)+ \frak{D}(z^\gamma;\lambda,\mu,L_c)\bigr)xy +(\,\varrho(z^\beta,\lambda)\,\varrho(z^\gamma,\mu) +\,\varrho(z^\beta,\mu)\,\varrho(z^\gamma,\lambda))xy,
\end{split}
\end{equation*}
thus implying the desired conclusion.
\end{proof}

We want to emphasize that it follows from \eqref{xyprod} that the second linearization $D^2$ depends only on the product of $x$ and $y$, but not the elements separately. It also follows from Proposition~\ref{th311} that $\frak{D}(z^\alpha;a,b,L_c)$ is a commutative associative polynomial  in $a,b,L_c$ with integer coefficients. It is convenient to associate to $\frak{D}(z^\alpha;a,b,L_c)$ the polynomial
$$
\frak{D}(z^\alpha;a,b,p)\in \mathbb{Z}[a,b,p],
$$
which will be referred to as the \textit{Peirce symbol} of the nonassociative monomial $z^\alpha$.

For general values of $a$ and $b$, an explicit form of the Peirce symbol is rather involved, see however some explicit formulae in Table~\ref{tab2}. Some special cases when it is possible a closed simple form for tthe Peirce symbol are discussed in propositions below.

\begin{proposition}
For any $a ,p\in \Field$ there holds
\begin{equation}\label{eH}
\frak{D}(z^\alpha;a,\half12,p)=\frac{\,\varrho(z^\alpha,p) -\,\varrho(z^\alpha,a)\vphantom{\,\varrho(z^\alpha,p)}}{p-a}.
\end{equation}
\end{proposition}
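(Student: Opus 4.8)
The plan is to prove the identity \eqref{eH} by induction on $m=\deg z^\alpha$, mirroring the recursive structure of the Peirce symbol supplied by Proposition~\ref{th311}(i)--(ii) and of the Peirce polynomial supplied by \eqref{log1}. Write $R(p):=\varrho(z^\alpha,p)$ and, for a proper factorization $z^\alpha=z^\beta z^\gamma$, write $R_\beta,R_\gamma$ for the corresponding Peirce polynomials. The claim to be proved is that $\frak{D}(z^\alpha;a,\frac12,p)=\dfrac{R(p)-R(a)}{p-a}$, i.e. that the Peirce symbol specialized at $b=\frac12$ is precisely the divided difference of the Peirce polynomial in its remaining two arguments. (Note this is a polynomial identity in $\Field[a,p]$, since $p-a$ divides $R(p)-R(a)$ for any polynomial $R$, so there is no issue at $p=a$.)

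First I would dispose of the base case: for $m=1$ we have $z^\alpha=z$, $\frak{D}(z;a,\frac12,p)=0$ by (i), while $\varrho(z,q)=1$ by \eqref{omega1}, so the right-hand side is $(1-1)/(p-a)=0$. (One can also check $m=2$ directly from \eqref{mm2}: $D^2(z^2;c,x,y)=2xy$ gives $\frak{D}(z^2;a,b,p)=2$, and $\varrho(z^2,q)=2q$ gives $(2p-2a)/(p-a)=2$.) For the inductive step, take $z^\alpha=z^\beta z^\gamma$ with $\deg z^\beta,\deg z^\gamma<m$, apply (ii) with $b=\frac12$, and use the inductive hypothesis on both factors:
\begin{equation*}
\frak{D}(z^\alpha;a,\tfrac12,p)=p\Bigl(\frac{R_\beta(p)-R_\beta(a)}{p-a}+\frac{R_\gamma(p)-R_\gamma(a)}{p-a}\Bigr)+\varrho(z^\beta,a)\varrho(z^\gamma,\tfrac12)+\varrho(z^\beta,\tfrac12)\varrho(z^\gamma,a).
\end{equation*}
Now invoke Proposition~\ref{pro:half}, which gives $\varrho(z^\beta,\frac12)=\varrho(z^\gamma,\frac12)=1$. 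Hence the last two terms collapse to $R_\beta(a)+R_\gamma(a)$, and the expression becomes
\begin{equation*}
\frac{p\bigl(R_\beta(p)+R_\gamma(p)\bigr)-p\bigl(R_\beta(a)+R_\gamma(a)\bigr)+(p-a)\bigl(R_\beta(a)+R_\gamma(a)\bigr)}{p-a}=\frac{p\bigl(R_\beta(p)+R_\gamma(p)\bigr)-a\bigl(R_\beta(a)+R_\gamma(a)\bigr)}{p-a}.
\end{equation*}
Finally, by \eqref{log1} we have $R(q)=q\bigl(R_\beta(q)+R_\gamma(q)\bigr)$, so the numerator is exactly $R(p)-R(a)$, completing the induction.

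The computation is essentially routine, so there is no serious obstacle; the one point demanding a little care is making sure the divided difference $\frac{R(p)-R(a)}{p-a}$ is genuinely interpreted as the polynomial quotient (which exists and is unique in $\Field[a,p]$), so that the recursion in (ii) — whose left side is a polynomial by construction — is compared against a bona fide polynomial rather than a rational function with a spurious pole along $p=a$. The real content is simply the happy coincidence that $\frac12$ is the value that makes both $\varrho(z^\beta,\cdot)$ and $\varrho(z^\gamma,\cdot)$ equal to $1$ simultaneously (Proposition~\ref{pro:half}), which is precisely what lets the inhomogeneous terms in the recursion reorganize into the divided-difference form; this is the structural reason $\frac12$ is special and is worth flagging in the proof.
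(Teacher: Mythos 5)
Your proof is correct and follows essentially the same route as the paper: an induction on $\deg z^\alpha$ driven by recursion (ii) of Proposition~\ref{th311}, the identity $\varrho(\cdot,\half12)=1$ from Proposition~\ref{pro:half}, and the product rule \eqref{log1}. The paper merely packages the identical computation through the auxiliary quantity $g(z^\alpha,a,p)=(p-a)\frak{D}(z^\alpha;a,\half12,p)+\varrho(z^\alpha,a)$, showing it obeys the same recursion and initial condition as $\varrho(z^\alpha,p)$ — which also renders your (worthwhile) remark about interpreting the divided difference as a polynomial quotient automatic.
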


\begin{proof}
By Proposition~\ref{pro:half}, $\,\varrho(z^\alpha,\half12)=1$, hence  using (ii) in Proposition~\ref{th311}  we  find
$$
\frak{D}(z^\beta z^\gamma;a,\half12,p)= q(\frak{D}(z^\beta;a,\half12,p)+\frak{D}( z^\gamma;a,\half12,p)+\,\varrho(z^\alpha,a)+\,\varrho(z^\beta,a).
$$
Let us define $$g(z^\alpha,a,p)=(p-a)\frak{D}(z^\alpha;a,\half12,p)+\,\varrho(z^\alpha,a).
$$ Then applying the latter identity we obtain
\begin{align*}
g(z^\beta z^\gamma,a,p)&=
(p-a)\frak{D}(z^\beta z^\gamma;a,\half12,p)+\,\varrho(z^\beta z^\gamma,a)\\
&=
(p-a)\frak{D}(z^\beta z^\gamma;a,\half12,p)+p(\,\varrho(z^\beta,a)+\,\varrho( z^\gamma,a))\\
&=p(p-a)\bigl(\frak{D}(z^\beta;a,\half12,p)+\frak{D}( z^\gamma;a,\half12,p)\bigr)+q(\,\varrho(z^\beta,a)+\,\varrho( z^\gamma,a))\\
&=q(g(z^\beta,a,p)+ g(z^\gamma,a,p)).
\end{align*}
By (i) in Proposition~\ref{th311} we have $\frak{D}(z;a,\half12,p)=0$, hence $q(z,a,p)=1$. This implies by induction that $g(z^\alpha,a,p)$ does not depend on $a$ and also that  $g(z^\alpha,a,p)=\,\varrho(z^\alpha,p)$ for any $z^\alpha$, cf. Definition~\ref{def:omega}. The latter identity  yields \eqref{eH}.
\end{proof}

\begin{table}
\begin{tabular}{l|l|l|l}
$z^\alpha$ & $D(z^\alpha;x,\cdot)$ & $\,\varrho(z^\alpha,q)$ &  $\frak{D}(z^\alpha,\lambda,\mu)$ \\\hline
$z$ & $1$& $1$&$0$\\
$z^2$ & $2L_x$& $2q$&$2$\\
$z^3$ & $L_{x^2}+2L_x^2$& $2q^2+q$& $2(L_c+\lambda+\mu)$\\
$z^4$ & $L_{x^3}+L_xL_{x^2}+2L_x^3$& $2q^3+q^2+q$&$2L_c^2+2aL_c+2\lambda^2 +2\mu^2+\lambda+\mu-4\lambda\mu$\\
$z^2z^2$ & $4L_{x^2}L_x$& $4q^2$&$4L_c+8\lambda\mu$\\
\end{tabular}
\smallskip
\caption{Some explicit formulae of the Peirce polynomials}\label{tab1}
\end{table}

\begin{proposition}\label{pro:principal}
For the principal powers
$$
z^1=z, \quad z^2=zz, \quad z^n=z^{n-1}z \quad\forall n\ge 2,
$$
there holds
\begin{align}\label{principalpowers1}
\,\varrho(z^{n},q)&=\frac{2q^n-q^{n-1}-q}{q-1}, \quad \forall n\ge 1\\
\label{principalpowers2}
\frak{D}(z^{n};a,b,p)&=R_n(p,a)+R_n(p,b)-R_n(p,\half12),
\end{align}
where
$$
R_n(x,y)=\frac{\varrho(z^n,x)-\varrho(z^n,y)}{x-y}.
$$

\end{proposition}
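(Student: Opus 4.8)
The plan is to prove both formulae by induction on $n$ via the decomposition $z^{n}=z^{n-1}z$, feeding it into the two recursions already established in the excerpt: the multiplicativity rule \eqref{log1} for the Peirce polynomial, and part (ii) of Proposition~\ref{th311} for the Peirce symbol. For \eqref{principalpowers1}, the first move is to observe that \eqref{log1} applied to $z^{n}=z^{n-1}z$ together with $\varrho(z,q)=1$ from \eqref{omega1} yields the scalar recursion $\varrho(z^{n},q)=q\,\varrho(z^{n-1},q)+q$. The base case $n=1$ is immediate since $\frac{2q-1-q}{q-1}=1$, and the inductive step is a one-line rational computation: substituting the formula for $\varrho(z^{n-1},q)$ gives $q\cdot\frac{2q^{n-1}-q^{n-2}-q}{q-1}+q=\frac{2q^{n}-q^{n-1}-q^{2}+q^{2}-q}{q-1}$, which simplifies to the claim.

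For \eqref{principalpowers2}, the key preliminary step is to specialize part (ii) of Proposition~\ref{th311} to $z^{n}=z^{n-1}z$: using part (i), i.e. $\frak{D}(z;a,b,p)=0$, and $\varrho(z,q)=1$, the recursion collapses to
\[
\frak{D}(z^{n};a,b,p)=p\,\frak{D}(z^{n-1};a,b,p)+\varrho(z^{n-1},a)+\varrho(z^{n-1},b),\qquad n\ge2.
\]
Next I would reformulate the target. By formula \eqref{eH} and Proposition~\ref{pro:half} (which gives $\varrho(z^{\alpha},\tfrac12)=1$), one has $R_n(p,t)=\frac{\varrho(z^{n},p)-\varrho(z^{n},t)}{p-t}=\frak{D}(z^{n};t,\tfrac12,p)$ for every $t$, so that \eqref{principalpowers2} is equivalent to the additive decomposition
\[
\frak{D}(z^{n};a,b,p)=\frak{D}(z^{n};a,\tfrac12,p)+\frak{D}(z^{n};b,\tfrac12,p)-\frak{D}(z^{n};\tfrac12,\tfrac12,p).
\]

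This last identity I would then close by induction on $n$. Apply the specialized recursion to each of the three terms on the right: the main terms produce $p$ times the right-hand side at level $n-1$, while the constant contributions are $\varrho(z^{n-1},a)+1$, $\varrho(z^{n-1},b)+1$, and $\varrho(z^{n-1},\tfrac12)+\varrho(z^{n-1},\tfrac12)=2$, whose signed combination is exactly $\varrho(z^{n-1},a)+\varrho(z^{n-1},b)$; hence the right-hand side satisfies the very same recursion as $\frak{D}(z^{n};a,b,p)$. The base case $n=1$ is trivial since $R_1\equiv 0$ and $\frak{D}(z;a,b,p)=0$. Finally, substituting $R_n(p,t)=\frac{\varrho(z^{n},p)-\varrho(z^{n},t)}{p-t}$ back in — and, if a closed form is wanted, the expression \eqref{principalpowers1} — recovers the stated formula.

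There is no serious obstacle: this is a routine double induction. The one point deserving care is the reformulation step in the second paragraph — rewriting $R_n(p,t)$ as $\frak{D}(z^{n};t,\tfrac12,p)$ via \eqref{eH} — since it is precisely this identification that converts an opaque three-variable identity into one that feeds directly into the single-variable recursion of Proposition~\ref{th311}; without it the bookkeeping of the $a,b$-cross terms would be awkward. It is also worth remarking in passing that $R_n(x,y)$ is a genuine polynomial in $\mathbb{Z}[x,y]$, not merely a rational function, because $\varrho(z^{n},x)-\varrho(z^{n},y)$ is divisible by $x-y$; this is what makes the substitutions above legitimate as identities of polynomials.
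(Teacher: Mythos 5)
Your proof is correct and follows essentially the same route as the paper: both parts are inductions on $n$ through the decomposition $z^{n}=z^{n-1}z$, using \eqref{log1} to get $\varrho(z^{n},q)=q(\varrho(z^{n-1},q)+1)$ and the collapsed recursion $\frak{D}(z^{n};a,b,p)=p\,\frak{D}(z^{n-1};a,b,p)+\varrho(z^{n-1},a)+\varrho(z^{n-1},b)$ from Proposition~\ref{th311}(ii). The only (harmless) difference is that you verify that the right-hand side of \eqref{principalpowers2} satisfies this same recursion by identifying $R_n(p,t)$ with $\frak{D}(z^{n};t,\half12,p)$ via \eqref{eH}, whereas the paper checks the equivalent divided-difference identity $R_{n+1}(x,y)-xR_n(x,y)=1+\varrho(z^{n},y)$ directly from the definition of $R_n$.
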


\begin{proof}
Since $\varrho(z,q)=1$ ae have
$$
\,\varrho(z^n,q)=\,\varrho(zz^{n-1},q)=q(\,\varrho(z^{n-1},q)+1),
$$
therefore \eqref{principalpowers1} easily follows by induction. Similarly, note that $\frak{D}(z^1;a,b,p)=0$ trivially satisfies \eqref{principalpowers2}. Arguing by induction we assume that \eqref{principalpowers2} holds for all principal powers $\le n$. By (ii) in Proposition~\ref{th311} and $\frak{D}( z;a,b,p)=0$  we obtain
\begin{equation}\label{cancel}
\begin{split}
\frak{D}(z^{n+1};a,b,p)&=p(\frak{D}(z^{n};a,b,p)+\frak{D}( z;a,b,p))
+\,\varrho(z^{n},a)\,\varrho(z,b) +\,\varrho(z^{n},b)\,\varrho(z,a)\\
&=p\frak{D}(z^{n};a,b,p)
+\,\varrho(z^{n},a) +\,\varrho(z^{n},b)\\
&=p\left(R_n(p,a)+R_n(p,b)-R_n(p,\half12)\right)+\,\varrho(z^{n},a) +\,\varrho(z^{n},b)
\end{split}
\end{equation}
Since
$$
R_{n+1}(x,y)=\frac{\varrho(z^{n+1},x)-\varrho(z^{n+1},y)}{x-y}=
1+\frac{x}{x-y}\varrho(z^n,x)+\frac{y}{y-x}\varrho(z^n,y),
$$
we have
\begin{align*}
R_{n+1}(x,y)-xR_n(x,y)&=
1+\frac{x}{x-y}\varrho(z^n,x)+\frac{y}{y-x}\varrho(z^n,y)- \frac{x}{x-y}\varrho(z^n,x)-\frac{x}{y-x}\varrho(z^n,y)\\
&=1+\varrho(z^n,y),
\end{align*}
therefore we find from \eqref{cancel} after cancelations
\begin{align*}
(R_{n+1}(p,a)+R_{n+1}(p,b)-R_{n+1}(p,\half12))-\frak{D}(z^{n+1};a,b,p)=
1-\varrho(z^{n},b)=0,
\end{align*}
thus implying \eqref{principalpowers2} by induction.
\end{proof}

\begin{proposition}\label{pro:plenarypow}
For the plenary powers
$$
z^{[1]}=z, \qquad z^{[n]}=z^{[n-1]}z^{[n-1]} \quad\forall n\ge 2,
$$
there holds for all $n\ge0$:
\begin{align}\label{plenarypowers1}
\,\varrho(z^{[n+1]},q)&=(2q)^n, \\
\label{plenarypowers2}
\frak{D}(z^{[n+1]};a,b,p)&=\frac{\varrho(z^{[n]},p)- \varrho(z^{[n]},a)\,\varrho(z^{[n]},b)}{p-2ab}=
2^n\,\frac{p^n- (2ab)^n}{p-2ab}.
\end{align}

\end{proposition}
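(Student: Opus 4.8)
The plan is to run two inductions on $n$, along the same lines as the proof of Proposition~\ref{pro:principal}, now exploiting that the two subtrees of a plenary power coincide, so that all the product rules simplify considerably.

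First I would prove \eqref{plenarypowers1} by induction on $n\ge0$. The base case $n=0$ is $z^{[1]}=z$, for which $\varrho(z,q)=1=(2q)^0$ by \eqref{omega1}. For the step (for $n\ge1$) write $z^{[n+1]}=z^{[n]}z^{[n]}$; since the two factors are identical, \eqref{log1} collapses to
\[
\varrho(z^{[n+1]},q)=q\bigl(\varrho(z^{[n]},q)+\varrho(z^{[n]},q)\bigr)=2q\,\varrho(z^{[n]},q)=2q\cdot(2q)^{n-1}=(2q)^{n},
\]
which closes the induction; in particular $\varrho(z^{[n]},q)=(2q)^{n-1}$ for $n\ge1$.

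Next I would treat \eqref{plenarypowers2}. Writing again $z^{[n+1]}=z^{[n]}\circ z^{[n]}$ and feeding two equal factors into the recursion~(ii) of Proposition~\ref{th311}, the two mixed terms coincide and one obtains the first order recursion (in the height index)
\[
\frak{D}(z^{[n+1]};a,b,p)=2p\,\frak{D}(z^{[n]};a,b,p)+2\,\varrho(z^{[n]},a)\,\varrho(z^{[n]},b),
\]
with initial value $\frak{D}(z^{[1]};a,b,p)=\frak{D}(z;a,b,p)=0$ coming from~(i). Substituting \eqref{plenarypowers1} rewrites the inhomogeneous term as $2(2a)^{n-1}(2b)^{n-1}$, and it then remains to check that the claimed closed form
\[
F_n:=2^{n}\,\frac{p^{n}-(2ab)^{n}}{p-2ab}=2^{n}\sum_{j=0}^{n-1}p^{\,j}(2ab)^{\,n-1-j}\qquad(\text{so }F_0=0)
\]
solves this recursion. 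After clearing the denominator $p-2ab$ this reduces to the telescoping identity $p^{n}-(2ab)^{n}=p\bigl(p^{n-1}-(2ab)^{n-1}\bigr)+(2ab)^{n-1}(p-2ab)$, so the induction goes through. Finally, the first equality in \eqref{plenarypowers2} is just the rewriting $\varrho(z^{[n+1]},p)-\varrho(z^{[n+1]},a)\varrho(z^{[n+1]},b)=(2p)^{n}-(4ab)^{n}=2^{n}\bigl(p^{n}-(2ab)^{n}\bigr)$, again via \eqref{plenarypowers1}.

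No step here is a genuine obstacle — the computation is entirely routine. The only thing that needs a little care is the index bookkeeping: keeping track of the shift between $z^{[n]}$ and $z^{[n+1]}$ (equivalently $\varrho(z^{[n]},q)=(2q)^{n-1}$ versus $\varrho(z^{[n+1]},q)=(2q)^{n}$), and reading $\frac{p^{n}-(2ab)^{n}}{p-2ab}$ as the polynomial $\sum_{j=0}^{n-1}p^{\,j}(2ab)^{\,n-1-j}$, which makes the base case $n=0$ and the degenerate specialization $p=2ab$ unproblematic.
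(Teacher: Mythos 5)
Your proof is correct and follows essentially the same route as the paper's, which likewise derives the recursion $\frak{D}(z^{[n+2]};a,b,p)=2\bigl(p\,\frak{D}(z^{[n+1]};a,b,p)+\varrho(z^{[n+1]},a)\,\varrho(z^{[n+1]},b)\bigr)$ from (ii) of Proposition~\ref{th311} with equal factors and leaves the rest to the reader. Note that in verifying the first equality of \eqref{plenarypowers2} you have (correctly) read the middle expression with $\varrho(z^{[n+1]},\cdot)$ rather than the $\varrho(z^{[n]},\cdot)$ printed in the statement, which is an index typo there: only the $z^{[n+1]}$ version matches the closed form $2^{n}\frac{p^{n}-(2ab)^{n}}{p-2ab}$, as your check at $n=1$ would confirm.
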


\begin{proof}
The proof is by induction and is left to the reader. We mention only the induction step in \eqref{plenarypowers2}:
\begin{align*}
\frak{D}(z^{[n+2]};a,b,p)&=2\biggl(p\frak{D}(z^{[n+1]};a,b,p)+ \varrho(z^{[n+1]},a)\,\varrho(z^{[n+1]},b)\biggr)\\
&=2^{n+2}\left(\frac{p^{n+2}- p\cdot (2ab)^{n+1}}{p-2ab} +  (2ab)^{n+1}\right)\\
&=2^{n+2}\frac{p^{n+2}-(2ab)^{n+2}}{p-2ab}.
\end{align*}
\end{proof}

\section{The Peirce polynomial and the Peirce symbol}\label{sec:Peircepol}
Now we assume that an algebra $A$ satisfies a weighted identity
\begin{equation}\label{Identity}
P(z):=\sum_{\alpha}\phi_{z^\alpha}(z)z^\alpha =0,
\end{equation}
where only finitely many terms are nonzero.  Specializing $z=c$, where $c^2=c$ is an algebra idempotent, gives
\begin{equation}\label{xc1}
\sum_{\alpha}\phi_{z^\alpha}(c)=0.
\end{equation}
 The linearization of \eqref{Identity} at $x\in A$ along $y\in A$ yields a new identity
\begin{equation}\label{linear1st}
\sum_{\alpha}\delta_1\phi_{z^\alpha}(x,y)x^\alpha+\phi_{z^\alpha}(x)D^1(z^\alpha;x,y) =0
\end{equation}
which is linear in $y$. Specialization $x=c$ gives by virtue of \eqref{PeirceD}
\begin{equation}\label{PPei}
(\sum_{\alpha}\delta_1\phi_{z^\alpha}(c,y))c+ \bigl(\sum_{\alpha}\phi_{z^\alpha}(c)\,\varrho(z^\alpha,L_c)\bigr)y=0
\end{equation}

\begin{definition}
The polynomial
\begin{equation}\label{def:peirce}
\,\varrho_c(P,t):=\sum_{\alpha}\phi_{z^\alpha}(c)\,\varrho(z^\alpha,t)
\end{equation}
is called the \textit{Peirce
polynomial} of the algebra identity \eqref{identity} at the idempotent $c$. The set of zeros of $\,\varrho_c(P,q)$ is called the \textit{Peirce spectrum of the identity} $P$ relative to the idempotent $c$, denoted by $\sigma(P,c)$. As a convention we write $\sigma(P,c)=\Field$ when $\varrho_c(P)\equiv 0$. A value $\lambda\in \sigma(P,c)$ is called \textit{simple} if $\lambda$ is a single root of $\varrho_c(P,t).$
\end{definition}

An algebra $A$ with identity $P$ is called \textit{homogeneous} if the Peirce polynomial does not depend on a choice of  idempotent $c$. In that case we write just $\varrho(P,t).$ The simplest example is any algebra satisfying an identity with constant coefficients is homogeneous. But there many   examples of homogeneous algebras satisfying non-constant coefficient identities. These include all baric algebras and Hsiang algebras (see below).

\begin{example}\label{ex:Jordan}
We illustrate the above concepts for the well-known class of Jordan algebras. We start with \eqref{Jai2} and substitute $y=x$ to obtain
\begin{equation}\label{powerasso}
P=xx^3-x^2x^2=0.
\end{equation}
It is well-known that the latter identity implies that $A$ must be power-associative when $\mathrm{char} \Field\ne 2,3,5$ \cite{Albert48}.
In our notation, \eqref{powerasso} has constant coefficients: $\phi_{xx^3}=1$ and $\phi_{x^2x^2}=-1$. Therefore the resulting Peirce polynomial does not depend on a choice of an idempotent and may be found using Table~\ref{tab1}:
\begin{equation}\label{powerP}
\,\varrho(P,q)=\,\varrho(zz^3,q)-\,\varrho(z^2z^2,q)=2q^3+q^2+q-4q^2=q(2q-1)(q-1)
\end{equation}
which implies that the Peirce spectrum of a Jordan algebra does not depend on an idempotent and equal $\{0,\frac12,1\}$. In fact, as we know, for a concrete idempotent $c$,  the spectrum of $L_c$ may vary and the multiplicity of  $\frac12$ maybe zero for certain idempotents. It is well-known \cite{FKbook}, \cite{Koecher} that if $A$ is a formally real Jordan algebra then its simplicity is equivalent to the presence of $\frac12$ in the spectrum of any idempotent.
\end{example}

Using this definition, \eqref{PPei} amounts to the following condition:
\begin{align}
\label{ppee}
 \,\varrho_c(P,L_c)y&=-\tau(y) c,\qquad\text{where}\quad\tau(y):=\sum_{\alpha}\delta_1\phi_{z^\alpha}(c,y)\in \Field.
\end{align}
Thus, the Peirce polynomial of $P$ of $L_c$ is (at most) rank one endomorphism of $A$. It also follows from \eqref{ppee} that if $y\nparallel c$ is an eigenvector of $L_c$ with eigenvalue $\lambda$ then
$$
y\in \ker \,\varrho_c(P,L_c)\cap \ker \tau.
$$
In particular, this implies

\begin{proposition}
 \label{pro:lamm}
If $y\nparallel c$ is an eigenvector of $L_c$ with eigenvalue $\lambda$ then
\begin{align}\label{ylam}
 \,\varrho_c(P,\lambda)&=0, \\
\tau(y)&=0,\label{ylam1}
\end{align}
i.e.  $\lambda$ is in the Peirce spectrum $\sigma(P,c)$:
$$
\sigma(c)=\sigma(L_c)\subset \sigma(P,c)\cup\{1\}.
$$
\end{proposition}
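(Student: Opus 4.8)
The plan is to read off the statement directly from equation \eqref{ppee}, which is exactly the specialization $x=c$ of the linearized identity \eqref{linear1st}. First I would recall that \eqref{linear1st} is the first-order linearization of the weighted identity $P(z)=0$ and hence holds for all $x,y\in A$; substituting $x=c$ and using Proposition on the Peirce operator (i.e.\ \eqref{PeirceD}, $D^1(z^\alpha;c,y)=\varrho(z^\alpha,L_c)y$) together with $c^\alpha=c$ collapses the identity to $\tau(y)c+\varrho_c(P,L_c)y=0$, which is \eqref{ppee}. So the whole content of the proposition is a consequence of \eqref{ppee} once we feed it an eigenvector.

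Next I would take $y\nparallel c$ with $L_cy=\lambda y$ and plug it into \eqref{ppee}, obtaining $\varrho_c(P,\lambda)\,y=-\tau(y)\,c$. The key observation is that the left-hand side is a scalar multiple of $y$ while the right-hand side is a scalar multiple of $c$, and $y$ and $c$ are linearly independent by hypothesis $y\nparallel c$. Over a field, two vectors that are scalar multiples of linearly independent vectors and equal to each other must both vanish; hence $\varrho_c(P,\lambda)=0$ and $\tau(y)=0$ simultaneously. This gives \eqref{ylam} and \eqref{ylam1}. One small point worth addressing explicitly: one should note that $\varrho_c(P,\lambda)$ here denotes the scalar obtained by evaluating the polynomial $\varrho_c(P,t)$ at $t=\lambda$, and that $\varrho_c(P,L_c)y=\varrho_c(P,\lambda)y$ because $y$ is an eigenvector of $L_c$ — this is the scalar specialization already recorded in \eqref{PeirceD1} applied termwise to the definition \eqref{def:peirce}.

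Finally, the spectral inclusion $\sigma(c)=\sigma(L_c)\subset\sigma(P,c)\cup\{1\}$ follows by applying the above to every eigenvalue: each $\lambda\in\sigma(L_c)$ either admits an eigenvector non-collinear with $c$, in which case $\lambda\in\sigma(P,c)$ by \eqref{ylam}, or its only eigenvectors are multiples of $c$, which forces $\lambda=1$ since $L_cc=c^2=c$. Both cases land in $\sigma(P,c)\cup\{1\}$.

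There is essentially no obstacle here; the proposition is a clean bookkeeping corollary of \eqref{ppee}. The only place requiring a word of care is the degenerate case $\varrho_c(P)\equiv 0$, where by our convention $\sigma(P,c)=\Field$ and all inclusions are trivially true; the argument above in fact still produces $\tau(y)=0$ for every eigenvector, which is the nontrivial residual content in that situation. Nothing beyond the results already established (Proposition on \eqref{PeirceD}/\eqref{PeirceD1} and the derivation of \eqref{ppee}) is needed.
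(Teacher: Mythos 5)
Your proposal is correct and follows the paper's own route: the paper likewise derives the proposition directly from \eqref{ppee}, observing that for an eigenvector $y\nparallel c$ the relation $\varrho_c(P,\lambda)\,y=-\tau(y)\,c$ forces both scalars to vanish by linear independence of $y$ and $c$. Your additional remarks on the eigenvalue $1$ case and the degenerate convention are consistent with the paper's treatment and require no changes.
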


\begin{remark}
We should emphasize that the Peirce spectrum of the identity  $\sigma(P,c)$ may or may not contain the eigenvalue $1$ depending on $P$ or $c$. But the Peirce spectrum $\sigma(c)=\sigma(L_c)$ of the idempotent $c$ always contain $1$ because of $cc=c$. In other words, if $1\not\in \sigma(P,c)$ then $c$ is primitive in the sense that $\lambda=1$ has multiplicity one and the span $A_c(1)=\R{}c $ is one-dimensional.
\end{remark}


\begin{proposition}
 \label{pro:D2}
Let $c$ be a semi-simple idempotent  and let \begin{equation}\label{directsum}
A=\bigoplus_{\nu\in \sigma(c)}A_c(\nu),
\end{equation}
be the corresponding direct sum decomposition of $A$ into invariant submodules $A_c(\lambda_k)$ of $L_c$ corresponding to the eigenvalues $\nu\in \sigma(P,c)$, i.e. $L_c=\nu$ on $A_c(\nu)$. Then for any $x\in A_c(\lambda)$ and $y\in A_c(\mu)$
\begin{equation}\label{Qeq}
\sum_{\nu\in \sigma(c)} Y(\lambda,\mu,\nu)(xy)_{\nu}=
-\tau_2(x,y)c-\tau_1(x,\mu)y-\tau_1(y,\lambda)x,
\end{equation}
where for arbitrary $u,v\in A$ and $a\in \Field$
\begin{align*}
Y(\lambda,\mu,\nu)&= Y(\varrho_c(P),\lambda,\mu,\nu)=\sum_{\alpha}
\phi_{z^\alpha}(c)\frak{D}(z^\alpha,\lambda,\mu,\nu),\\
\tau_1(u,a)&=\tau_1(\varrho_c(P),u,a)= \sum_{\alpha}\delta_1\phi_{z^\alpha}(c;u)\,\varrho(z^\alpha,a),\\
\tau_2(u,v)&=\tau_2(\varrho_c(P),u,v)=\sum_{\alpha}\delta_2\phi_{z^\alpha}(c;u,v),
\end{align*}
and $xy=\sum_{\nu}(xy)_{\nu}$ is the decomposition according to \eqref{directsum}.
\end{proposition}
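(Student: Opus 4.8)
The plan is to obtain \eqref{Qeq} by taking the second-order linearization of the defining identity \eqref{Identity}, evaluating it at the idempotent $c$, and then reading off the eigenvalue data from \eqref{PeirceD1} and Proposition~\ref{th311}. Concretely, I would fix $x,y\in A$ and substitute $z=c+sx+ty$ into \eqref{Identity}, regarding the outcome as a polynomial identity in the scalar parameters $s,t\in\Field$. Writing $w=sx+ty$ and expanding each coefficient and each monomial about $c$,
$$\phi_{z^\alpha}(c+w)=\phi_{z^\alpha}(c)+\delta_1\phi_{z^\alpha}(c;w)+\delta_2\phi_{z^\alpha}(c;w)+\dots,\qquad (c+w)^\alpha=c+D^1(z^\alpha;c,w)+D^2(z^\alpha;c,w)+\dots,$$
using $c^\alpha=c$, Proposition~\ref{pro:linearization}, and the fact that $\delta_i\phi_{z^\alpha}(c;\cdot)$ and $D^i(z^\alpha;c,\cdot)$ are homogeneous of degree $i$, I would collect the coefficient of $st$ in $\sum_\alpha\phi_{z^\alpha}(c+w)(c+w)^\alpha=0$. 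Since the $st$-coefficient of a quadratic form evaluated at $sx+ty$ is precisely its polarization, the $st$-parts of $\delta_2\phi_{z^\alpha}(c;w)$ and $D^2(z^\alpha;c,w)$ are $\delta_2\phi_{z^\alpha}(c;x,y)$ of \eqref{delta2} and $D^2(z^\alpha;c,x,y)$ of \eqref{delta1}, the mixed product $\delta_1\phi_{z^\alpha}(c;w)\,D^1(z^\alpha;c,w)$ contributes $\delta_1\phi_{z^\alpha}(c;x)D^1(z^\alpha;c,y)+\delta_1\phi_{z^\alpha}(c;y)D^1(z^\alpha;c,x)$, and $\delta_2\phi_{z^\alpha}(c;w)$ is multiplied by $c^\alpha=c$. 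This produces the bilinear identity, valid for all $x,y\in A$:
$$\sum_\alpha\Bigl(\delta_2\phi_{z^\alpha}(c;x,y)\,c+\delta_1\phi_{z^\alpha}(c;x)\,D^1(z^\alpha;c,y)+\delta_1\phi_{z^\alpha}(c;y)\,D^1(z^\alpha;c,x)+\phi_{z^\alpha}(c)\,D^2(z^\alpha;c,x,y)\Bigr)=0.$$

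Next I would specialize $x\in A_c(\lambda)$ and $y\in A_c(\mu)$. By \eqref{PeirceD1}, $D^1(z^\alpha;c,x)=\varrho(z^\alpha,\lambda)\,x$ and $D^1(z^\alpha;c,y)=\varrho(z^\alpha,\mu)\,y$, while Proposition~\ref{th311} gives $D^2(z^\alpha;c,x,y)=\frak{D}(z^\alpha;\lambda,\mu,L_c)\,xy$. Substituting these and recognizing $\tau_1$ and $\tau_2$ from their definitions in the statement, the identity becomes
$$\tau_2(x,y)\,c+\tau_1(x,\mu)\,y+\tau_1(y,\lambda)\,x+\sum_\alpha\phi_{z^\alpha}(c)\,\frak{D}(z^\alpha;\lambda,\mu,L_c)\,xy=0.$$
Finally, I would decompose $xy=\sum_{\nu\in\sigma(c)}(xy)_\nu$ along \eqref{directsum}; since $L_c$ acts as $\nu$ on $A_c(\nu)$ and $\frak{D}(z^\alpha;\lambda,\mu,\cdot)$ is a polynomial, one has $\frak{D}(z^\alpha;\lambda,\mu,L_c)(xy)_\nu=\frak{D}(z^\alpha;\lambda,\mu,\nu)(xy)_\nu$, so the last sum equals $\sum_\nu\bigl(\sum_\alpha\phi_{z^\alpha}(c)\frak{D}(z^\alpha;\lambda,\mu,\nu)\bigr)(xy)_\nu=\sum_\nu Y(\lambda,\mu,\nu)(xy)_\nu$. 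Transposing the first three terms to the right-hand side yields \eqref{Qeq}.

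I do not anticipate a genuine obstacle: the proposition is essentially a bookkeeping consequence of the linearization formalism already developed. The one step needing care is the coefficient extraction above — one must verify that the $st$-coefficients of the quadratic parts coincide with the polarizations of \eqref{delta1} and \eqref{delta2}, and that the mixed term $\delta_1\phi_{z^\alpha}(c;w)D^1(z^\alpha;c,w)$ is handled symmetrically in $x$ and $y$; this, and the validity of the second-order expansion itself, is precisely where $\mathrm{char}\,\Field\ne2,3$ is used. As a minor point, all sums over $\alpha$ run over the finitely many indices with $\phi_{z^\alpha}\not\equiv0$, so interchanging $\sum_\alpha$ and $\sum_\nu$ in the last step is harmless.
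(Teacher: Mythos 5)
Your proposal is correct and follows essentially the same route as the paper: the paper linearizes the first-order identity \eqref{linear1st} once more, sets $x=c$, and then polarizes via \eqref{delta1}--\eqref{delta2}, which produces exactly the bilinear identity you obtain by extracting the $st$-coefficient of $P(c+sx+ty)$; from there both arguments invoke \eqref{PeirceD1} and Proposition~\ref{th311} and decompose $xy$ along \eqref{directsum}. The only difference is presentational (two-parameter substitution versus linearize-then-polarize), and your explicit remark that $\frak{D}(z^\alpha;\lambda,\mu,L_c)(xy)_\nu=\frak{D}(z^\alpha;\lambda,\mu,\nu)(xy)_\nu$ on each Peirce component is a step the paper leaves implicit.
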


\begin{proof}
Linearizing further \eqref{linear1st}, we obtain
\begin{equation*}\label{linear2nd}
\sum_{\alpha}\left(\delta_2\phi_{z^\alpha}(x;y)x^\alpha+ \delta_1\phi_{z^\alpha}(x;y)D^1(z^\alpha;x,y)+ \phi_{z^\alpha}(x)D^2(z^\alpha;x,y)\right) =0
\end{equation*}
The substitution $x=c$ yields a quadratic in $y$ identity
\begin{equation*}
\bigl(\sum_{\alpha}\delta_2\phi_{z^\alpha}(c;y)\bigr)c+ \sum_{\alpha}\delta_1\phi_{z^\alpha}(c;y)D^1(z^\alpha;c,y)+ \phi_{z^\alpha}(c)D^2(z^\alpha;c,y) =0
\end{equation*}
Polarizing the latter identity using \eqref{delta1},  \eqref{delta2} and \eqref{PeirceD} yields
\begin{align*}
\bigl(\sum_{\alpha}\delta_2\phi_{z^\alpha}(c;x,y)\bigr)c&+ \bigl(\sum_{\alpha}\delta_1\phi_{z^\alpha}(c;x)\,\varrho(z^\alpha,L_c)\bigr)y
+
\bigl(\sum_{\alpha}\delta_1\phi_{z^\alpha}(c;y)\,\varrho(z^\alpha,L_c)\bigr)x\\
&+\sum_{\alpha}
\phi_{z^\alpha}(c)D^2(z^\alpha;c,x,y) =0.
\end{align*}
Now suppose that $x\in A_c(\lambda)$ and $y\in A_c(\mu)$. Then $\,\varrho(z^\alpha,L_c)x=\,\varrho(z^\alpha,\lambda)x$ and $\,\varrho(z^\alpha,L_c)y=\,\varrho(z^\alpha,\mu)y$, hence using \eqref{def:peirce} and \eqref{xyprod} we obtain \eqref{Qeq}, the proposition follows.
\end{proof}

\begin{definition}
The polynomial $Y(\varrho_c(P),\lambda,\mu,\nu)$ is called the \textit{Peirce symbol of the identity} $P$ relative to the idempotent $c$.
\end{definition}

The left hand side of \eqref{Qeq} is a linear combination of $(xy)_\nu\in A_c(\nu)$ for distinct $\nu$, and the right hand side contains at most three terms (depending on the coefficients $\tau$'s). This implies several observations which are important for extracting of fusion laws.

\begin{corollary}[Fusion laws]
\label{cor:fusionlaws}
Under assumptions of Proposition~\ref{pro:D2},
    $$
  Y(\lambda,\mu,\nu)\,(xy)_\nu=0\qquad \text{for any $\nu\not\in\{1,\lambda,\mu\}$.}
  $$
  Further, if $\nu\in\{1,\lambda,\mu\}$ and $Y(\lambda,\mu,\nu)\ne0$ then (up to the permutation $\lambda \leftrightarrow\mu$) the following three cases are possible:
\begin{itemize}
  \item[(i)]
  $\nu=1$: if $Y(\lambda,\mu,1)\ne0$ then
  $$
  (xy)_1\in \Span( c,\{x,y\}\cap A_c(1)).
  $$
  \item[(ii)]
 $\nu=\lambda\ne1$ and $\lambda\ne \mu$: if $Y(\lambda,\mu,\lambda)\ne0$ then $$Y(\lambda,\mu,\lambda)(xy)_\lambda=-\tau_1(y,\lambda)x.
 $$
   \item[(iii)]
 $\nu=\lambda\ne1$ and $\lambda= \mu$: if $Y(\lambda,\lambda,\lambda)\ne0$ then $$Y(\lambda,\lambda,\lambda)(xy)_\lambda=-\tau_1(y,\lambda)x- \tau_1(x,\lambda)y.
 $$
\end{itemize}
\end{corollary}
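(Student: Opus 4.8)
The plan is to obtain all the assertions simply by projecting the identity~\eqref{Qeq} of Proposition~\ref{pro:D2} onto the individual Peirce components $A_c(\nu)$. Since $c$ is semi-simple, the decomposition $A=\bigoplus_{\nu\in\sigma(c)}A_c(\nu)$ is direct, so there is a well-defined projection $\pi_\nu\colon A\to A_c(\nu)$ along the remaining summands. Applying $\pi_\nu$ to the left-hand side of \eqref{Qeq} picks out exactly the term $Y(\lambda,\mu,\nu)(xy)_\nu$, because $(xy)_{\nu'}\in A_c(\nu')$ for $\nu'\ne\nu$. Hence the whole argument reduces to recording which summand each of the three right-hand terms of \eqref{Qeq} lives in: $c\in A_c(1)$, $y\in A_c(\mu)$, and $x\in A_c(\lambda)$.

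First I would treat the generic case $\nu\notin\{1,\lambda,\mu\}$: here $\pi_\nu$ annihilates all three right-hand terms, so $Y(\lambda,\mu,\nu)(xy)_\nu=0$, which is the first assertion. For $\nu=1$, applying $\pi_1$ keeps $-\tau_2(x,y)c$ unconditionally, keeps $-\tau_1(x,\mu)y$ precisely when $\mu=1$ (i.e.\ when $y\in A_c(1)$), and keeps $-\tau_1(y,\lambda)x$ precisely when $\lambda=1$; altogether $Y(\lambda,\mu,1)(xy)_1\in\Span(c,\{x,y\}\cap A_c(1))$, and dividing by $Y(\lambda,\mu,1)$ whenever it is nonzero yields (i).

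It remains to handle $\nu=\lambda\ne1$. If $\lambda\ne\mu$, then $\pi_\lambda$ kills $-\tau_2(x,y)c$ (since $\lambda\ne1$) and $-\tau_1(x,\mu)y$ (since $\mu\ne\lambda$) and keeps only $-\tau_1(y,\lambda)x$, giving (ii); if $\lambda=\mu$, then $\pi_\lambda$ again kills the $c$-term but keeps both $-\tau_1(x,\lambda)y$ and $-\tau_1(y,\lambda)x$, giving (iii). The symmetric subcase $\nu=\mu\ne1$ with $\lambda\ne\mu$ follows from (ii) under the interchange $x\leftrightarrow y$, $\lambda\leftrightarrow\mu$, which accounts for the ``up to permutation'' clause; together with the generic case these exhaust all possibilities for $\nu$.

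There is no genuine difficulty in this argument beyond the careful bookkeeping of coincidences among the values $1,\lambda,\mu$. The only point that deserves attention is that when $\lambda$ or $\mu$ equals $1$ the ``$\nu=1$'' projection absorbs an additional term, which is exactly what is encoded by the expression $\{x,y\}\cap A_c(1)$ appearing in (i); everything else uses nothing more than the directness of the Peirce decomposition and the already-established formula~\eqref{Qeq}.
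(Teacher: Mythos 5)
Your argument is correct and is exactly the one the paper intends: the corollary is stated as an immediate consequence of comparing Peirce components on both sides of \eqref{Qeq}, using only that $c\in A_c(1)$, $x\in A_c(\lambda)$, $y\in A_c(\mu)$ and that the decomposition is direct. The case bookkeeping for the coincidences among $1,\lambda,\mu$ is handled correctly, so nothing is missing.
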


\section{Proof of Theorem~\ref{th1}}
With the  results of the previous sections in hand, we are  ready to finish the proof of the theorem.
Let an algebra $A$ satisfy \eqref{identity} and let $c\ne0$ be an algebra idempotent. By Proposition~\ref{pro:half}, $\,\varrho(z^\alpha,\frac12)=1$ for any monomial $z^\alpha$, hence using \eqref{xc1} we obtain
\begin{equation}\label{Pc12}
\,\varrho_c(P,\half12)=\sum_{\alpha}\phi_{z^\alpha}(c)\,\varrho(z^\alpha,\half12) =
\sum_{\alpha}\phi_{z^\alpha}(c),
\end{equation}
hence
$$
\,\varrho_c(P,\half12)=0,
$$
which shows that $\half12\in \sigma(P,c)$ and therefore yields the first part of the theorem.

In notation of Proposition~\ref{pro:D2} we obtain  by virtue of Proposition~\ref{pro:half} and \eqref{ylam1} for any $u\in A_c(\nu)$, $\nu\ne1$ that
\begin{equation}\label{tau0}
\tau_1(u,\half12)=
\sum_{\alpha}\delta_1\phi_{z^\alpha}(c;u)\,\varrho(z^\alpha,\frac12)=
\sum_{\alpha}\delta_1\phi_{z^\alpha}(c;u)=\tau(u)=0.
\end{equation}
Hence, \eqref{Qeq} yields by virtue of Proposition~\ref{pro:half} and using the definition of $\tau$ in \eqref{ppee}  that
\begin{equation}\label{Qeq1}
\begin{split}
\sum_{\nu\in \sigma(P,c)} Y(\lambda,\half12,\nu)(xy)_{\nu}
=-\tau_2(x,y) c- \tau_1(y,\lambda)x,\qquad \forall x\in A_c(\lambda), y\in A_c(\half12).
\end{split}
\end{equation}
Next, note that using \eqref{eH}
\begin{align*}
Y(\lambda,\half12,\nu)&=
\sum_{\alpha} \phi_{z^\alpha}(c)\frak{D}(z^\alpha;\lambda,\half12,\nu )
=\sum_{\alpha} \phi_{z^\alpha}(c)\frac{\,\varrho(z^\alpha,\nu ) -\,\varrho(z^\alpha,\lambda)}{\nu -\lambda}
=\frac{\,\varrho_c(P,\nu ) -\,\varrho_c(P,\lambda)}{\nu -\lambda},
\end{align*}
hence, since $\nu $ and  $\lambda$ are zeros of $\,\varrho_c(P,t)$ we have
\begin{equation}\label{Y12lam}
Y(\lambda,\half12,\nu)=
\left\{
  \begin{array}{ll}
   0&\nu\ne \lambda\\
\varrho_c'(P,\lambda)&\nu=\lambda.
  \end{array}
\right.
\end{equation}
Thus, \eqref{Qeq1} can be rewritten as
\begin{equation}\label{fus1}
\varrho_c'(P,\lambda)(xy)_{\lambda}
=-\tau_2(x,y) c- \tau_1(y,\lambda)x.
\end{equation}
By the assumption of the theorem the root $\lambda$ is simple, hence $\,\varrho_c'(P,\lambda)\ne0$.

If $\lambda=1$ then $x\in A_c(1)$ and therefore \eqref{fus1} implies $(xy)_1\in \Span( c,x)\subset A_c(1)$.

If $\lambda=\frac12$ then by \eqref{tau0} $\tau_1(y,\lambda)=\tau_1(y,\frac12)=0$, hence \eqref{fus1} yields $$(xy)_{\frac12}=-\tau_2(x,y) c,$$
implying $(xy)_{\frac12}=0$ and $\tau_2(x,y)=0$. In particular,
$$
A_c(\half12)A_c(\half12)\subset \bigoplus_{\nu\ne \frac12}A_c(\nu).
$$
which proves the claim (B) in Theorem~\ref{th1} for $\lambda=\frac12$.

Finally, if $\lambda\ne1,\frac12$ then \eqref{fus1} yields $$\varrho_c'(P,\lambda)(xy)_{\lambda}
=- \tau_1(y,\lambda)x,\qquad \tau_2(x,y)=0.
$$
This implies $(xy)_{\lambda}=0 $ and finishes the proof of Theorem~\ref{th1}.

\section{Degenerated identities}\label{sec:trivial}

According to Proposition~\ref{pro:lamm}, the spectrum of any idempotent in an algebra with a nontrivial identity $P(z)=0$ is a subset of the zero locus of the corresponding  Peirce polynomial $\,\varrho_c(P,q)$. This implies a nontrivial \textit{a priori} information about the spectrum as soon as the Peirce polynomial is not identically zero. But it may happen that for some idempotent $c$ there holds $\,\varrho_c(P,q)\equiv 0$. In that case one cannot derive any nontrivial information about the spectrum of $L_c$ directly from the algebra identity. This motivates the following definition.

\begin{definition}
An identity $P=0$ is called \textit{degenerated} relative to an idempotent $c$ if its Perce polynomial $\,\varrho_c(P,q)$ is identically zero.
\end{definition}

\begin{example}
One of the simplest degenerated identities is the baric algebra satisfying
\begin{equation}\label{ElLab}
P(z):=z^2z^2-2\omega(z)z^3+\omega(z)^2z^2=0,
\end{equation}
where $a:A\to \Field$ is an algebra homomorphism. Indeed, since $\omega(c)=1$ for all nonzero idempotents $c$, we have by virtue of Table~\ref{tab1}
$$
\,\varrho_c(P,q)=4q^2-2(2q^2+q)+2q\equiv 0.
$$
Algebras satisfying \eqref{ElLab} has been appeared in \cite{Bayara10} and studied by Elduque and Labra in \cite{EldLabra} (where it appears as identity (4)). It was proven in \cite{EldLabra} that the `gametization' of the original multiplication in $A$ to $x\star y = xy-\frac12\omega(x)y-\frac12\omega(y)x$ satisfies the plenary nilpotent identity $x^{\star2}\star x^{\star2}=0$, while $A^{\star2}\ne A$. The converse is also true; see Section~5 in \cite{EldLabra} for further information and open questions. Similar questions in the setting of baric or train algebras were studied in \cite{Mallol15}.

\end{example}

Note that in the above example the polynomial identity is factorizable:
$$
P=(z^2-a(z)z)^2.
$$
This motivates a natural question: what can be said about the Peirce spectrum of the algebras satisfying a \textit{decomposable} identity, i.e. when $P$ can be factorized as a proper product of two nonconstant nonassociative  polynomials
\begin{equation}\label{P1P2}
P(z)=P_1(z)P_2(z).
\end{equation}
The next proposition completely characterize the Peirce spectrum of decomposable identities: it turns out that the Peirce spectrum of a decomposable identity is either undetermined or  coincides (up to $\lambda=0$) with the Peirce spectrum of one of the factors.

\begin{proposition}
\label{pro:P1P2}
Let $A$ be an algebra with identity $P(z)$ such that $P$ is decomposable in the free nonassociative algebra $\Field(\{z\})$. Then for any nonzero idempotent $c$ of $A$ there holds
$$
\,\varrho_c(P_1,\half12)\,\,\varrho_c(P_2,\half12)=0.
$$
Furthermore,
\begin{itemize}
\item
if $\,\varrho_c(P_1,\frac12)=\,\varrho_c(P_2,\frac12)=0$ then $P$ is a degenerated identity;
\item
if $\,\varrho_c(P_i,\frac12)=0$ and $\,\varrho_c(P_j,\frac12)\ne0$ then $\sigma_c(P)=\{0\}\cup \sigma_c(P_i)$.
\end{itemize}
In particular, if $P=aP_1^k$, $a\in \Field$ and $k\ge2$ is an integer,  then the identity $P$ is degenerated.
\end{proposition}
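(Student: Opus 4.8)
The plan is to reduce every assertion to one multiplicativity identity for the Peirce polynomial of a product. Write $P_1(z)=\sum_\beta \psi_{z^\beta}(z)\,z^\beta$ and $P_2(z)=\sum_\gamma \chi_{z^\gamma}(z)\,z^\gamma$, so that $P(z)=\sum_{\beta,\gamma}\psi_{z^\beta}(z)\chi_{z^\gamma}(z)\,(z^\beta z^\gamma)$ is a finite sum of polynomial-map coefficients times nonassociative monomials. Applying $\varrho_c(\cdot,t)$ term by term, using the recursion \eqref{log1} in the form $\varrho(z^\beta z^\gamma,t)=t\bigl(\varrho(z^\beta,t)+\varrho(z^\gamma,t)\bigr)$, regrouping the two resulting families of terms, and invoking Proposition~\ref{pro:half} to identify $\sum_\beta\psi_{z^\beta}(c)=\varrho_c(P_1,\tfrac12)$ and $\sum_\gamma\chi_{z^\gamma}(c)=\varrho_c(P_2,\tfrac12)$, we arrive at
\[
\varrho_c(P,t)=t\bigl(\varrho_c(P_2,\tfrac12)\,\varrho_c(P_1,t)+\varrho_c(P_1,\tfrac12)\,\varrho_c(P_2,t)\bigr).
\]
The only point needing attention here is routine bookkeeping: $\psi_{z^\beta}\chi_{z^\gamma}$ is again a polynomial map (the degrees add), the double sum is finite, and passing to $\varrho_c$ term by term is legitimate even when distinct pairs yield the same monomial; note also that $P_1$ and $P_2$ need not be identities individually, but $\varrho_c(P_i,t)$ is a purely formal object built from their coefficients, so nothing is lost.

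All four claims now follow quickly. Putting $t=\tfrac12$ in the displayed identity gives $\varrho_c(P,\tfrac12)=\varrho_c(P_1,\tfrac12)\,\varrho_c(P_2,\tfrac12)$; on the other hand $\varrho_c(P,\tfrac12)=\sum_\alpha\phi_{z^\alpha}(c)=0$ by Proposition~\ref{pro:half} and \eqref{xc1}, so $\varrho_c(P_1,\tfrac12)\varrho_c(P_2,\tfrac12)=0$, which is the first assertion. If both factors vanish, the displayed identity forces $\varrho_c(P,t)\equiv 0$, i.e. $P$ is degenerated. If exactly one of them vanishes, say $\varrho_c(P_1,\tfrac12)=0$ while $\lambda_0:=\varrho_c(P_2,\tfrac12)\ne 0$, the identity collapses to $\varrho_c(P,t)=\lambda_0\,t\,\varrho_c(P_1,t)$, whose zero locus is precisely $\{0\}\cup\sigma_c(P_1)$, giving $\sigma_c(P)=\{0\}\cup\sigma_c(P_1)$.

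For the final statement, assume $a\ne 0$ (if $a=0$ then $P\equiv 0$ and $\varrho_c(P,t)\equiv 0$ trivially). Since $c^\beta=c$, Proposition~\ref{pro:half} yields $Q(c)=\varrho_c(Q,\tfrac12)\,c$ for every nonassociative polynomial $Q$; applying this to $Q=P_1$ and iterating the product, $P(c)=a\,\varrho_c(P_1,\tfrac12)^{k}\,c$, and from $P(c)=0$, $c\ne 0$ and torsion-freeness we get $\varrho_c(P_1,\tfrac12)=0$. Because $k\ge 2$ we may write $P=Q_1\cdot Q_2$ with each $Q_i$ a nonconstant iterated product of copies of $P_1$ (up to a scalar); then $Q_i(c)=\varrho_c(P_1,\tfrac12)^{m_i}\,c$ with $m_i\ge 1$, hence $\varrho_c(Q_i,\tfrac12)=0$ for $i=1,2$, and the displayed identity applied to this factorization yields $\varrho_c(P,t)\equiv 0$. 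The only step requiring care is the derivation of the product formula, and even there the work is purely bookkeeping; I do not anticipate a genuine obstacle.
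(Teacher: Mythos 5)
Your proof is correct and follows essentially the same route as the paper: both hinge on deriving the product formula $\varrho_c(P_1P_2,t)=t\bigl(\varrho_c(P_2,\tfrac{1}{2})\,\varrho_c(P_1,t)+\varrho_c(P_1,\tfrac{1}{2})\,\varrho_c(P_2,t)\bigr)$ from the recursion \eqref{log1} together with Proposition~\ref{pro:half}, and then reading off all four claims. Your handling of the final case $P=aP_1^k$ is in fact slightly more careful than the paper's (which factors $P$ as $P_1\cdot(aP_1^{k-1})$ and argues by induction on \eqref{Pcc}), but the substance is identical.
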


\begin{proof}
Let us consider
$$
P_1=\sum_\alpha \phi_{z^\alpha}z^\alpha, \quad
P_2=\sum_\beta \psi_{z^\beta}z^\beta.
$$
Then
$$
P=\sum_\alpha\sum_\beta \phi_{z^\alpha}\psi_{z^\beta}z^\alpha z^\beta=:
\sum_\gamma \Phi_{z^\gamma}(z) z^\gamma.
$$
Then specializing $z=c$ readily yields a scalar identity
$$
\,\varrho_c(P,\half12)=\sum_\gamma \Phi_{z^\gamma}(c)=0,
$$
cf. \eqref{Pc12}, which implies
\begin{equation}\label{Pcc}
\,\varrho_c(P_1P_2,\half12)=\,\varrho_c(P_1,\half12)\,\,\varrho_c(P_2,\half12)=0.
\end{equation}

Furthermore, using \eqref{log1} we find
\begin{align*}
\,\varrho_c(P,q)&=\sum_\alpha\sum_\beta \phi_{z^\alpha}(c)\psi_{z^\beta}(c)\,\varrho(z^\alpha z^\beta,q)\\
&=q\,\sum_\alpha\sum_\beta \phi_{z^\alpha}(c)\psi_{z^\beta}(c)(\,\varrho(z^\alpha,q)+\,\varrho( z^\beta,q))\\
&=q\sum_\beta \psi_{z^\beta}(c)\sum_\alpha \phi_{z^\alpha}(c)\,\varrho(z^\alpha,q)+q\,\sum_\alpha \psi_{z^\alpha}(c)\sum_\beta \phi_{z^\beta}(c)\,\varrho(z^\beta,q)
\end{align*}
which yields
\begin{equation}\label{log2}
\,\varrho_c(P_1P_2,q)=
q\,\biggl(\,\varrho_c(P_2,\half12)\,\,\varrho_c(P_1,q) +
\,\varrho_c(P_1,\half12)\,\,\varrho_c(P_2,q)\biggr).
\end{equation}

Since $\Field$ is a field, it follows from \eqref{Pcc} that at least one of $\,\varrho_c(P_1,\frac12)$ and $\,\varrho_c(P_2,\frac12)$ must be zero.  If $\,\varrho_c(P_1,\frac12)=\,\varrho_c(P_2,\frac12)=0$ then $\,\varrho_c(P,q)\equiv0$, therefore $P$ is a degenerated identity. Next, let $\,\varrho_c(P_1,\frac12)=0$ and $\,\varrho_c(P_2,\frac12)\ne0$. Then \eqref{log2} yields
$$
\,\varrho_c(P_1P_2,q)=q\,\,\varrho_c(P_2,\half12)\,\,\varrho_c(P_1,q).
$$
Therefore the Peirce spectrum of $\sigma_c(P)=\{0\}\cup \sigma_c(P_1)$.

Finally, if $P=aP^k$ then we may choose $P_1=P$ and $P_2=aP^{k-1}$. It follows from \eqref{Pcc}  by induction that $\,\varrho_c(P,\frac12)=0$, hence  we get from \eqref{log2} that $\,\varrho_c(P,q)\equiv 0$. This finishes the proof of the proposition.
\end{proof}

\begin{remark}
The identity \eqref{log2} is interesting by its own right. In particular,
taking into account that $\,\varrho(z^\alpha,\frac12)=1$ for any monomial $z^\alpha$ by Proposition~\ref{pro:half}, the identity \eqref{log2} can be thought of as a generalization of \eqref{log1} on general nonassociative polynomials.
\end{remark}


\section{Some applications to baric algebras}\label{sec:appl}

An algebra $A$ is called \textit{baric} if it admits a non-trivial algebra homomorphism $\omega:A\to \Field$ into the ground field $\Field$.
Here we revisit some well-established classes of baric algebras which appears in symbolic genetic.
A summary of recent results on baric algebras and basic references on algebras in genetics  can be found in Reed \cite{Reed} and W\"orz-Busekros (up to 1980) \cite{WBusekros}.

The homomorphism $\omega$ is called the  weight homomorphism. It easily follows from the definition that the kernel $\ker \omega$ is an ideal of $A$ and also that $\omega(c)=0$ or $\omega(c)=1$ for any idempotent $c$ of $A$. If $c\ne0$ is an idempotent in a train algebra then $\omega (e) = 1$, see Lemma~4.2 in \cite{WBusekros}.

Note that if $x\in A_c(\lambda)$  then
$$
\omega(cx)=\omega(\lambda x)=\lambda\omega(x),
$$
and on the other hand, $\omega(cx)=\omega(c)\omega(x)$, thus, $\lambda\ne1$ immediately implies that $x\in \ker \omega$. In particular,
\begin{equation}\label{keromega}
\bigoplus_{\lambda\ne 1} A_c(\lambda)\subset \ker \omega.
\end{equation}

There are two important well-studied classes of baric algebras: principal and plenary train algebras which have been introduced by Etherington \cite{Etherington} in connection with the symbolism of genetics, see also the references A. Worz-Busekros \cite{WBusekros}, Yu. Lyubich \cite{Lyubich92} and M. L. Reed  \cite{Reed} for more information.

\subsection{Principal train algebras}
Let $A$ be a baric train algebra of rank $n\ge2$, i.e. a commutative algebra satisfying
\begin{equation}\label{train1}
\gamma_0z^n+\gamma_1\omega(z)z^{n-1}+\ldots+\gamma_{n-1}\omega(z)^{n-1}z=0,
\end{equation}
where $\gamma_0=1$. In our notation, we have
\begin{equation}\label{omegaphi}
\phi_{z^k}(z)=\gamma_{n-k}\omega^{n-k}(z).
\end{equation}
In particular, the latter implies that $\omega(c)=1$ for any algebra idempotent $c\ne0$. Thus, the Peirce polynomial of an algebra satisfying \eqref{train1} does not depend on a choice of an idempotent. Note that substitution of $c$ in \eqref{train1} implies
\begin{equation}\label{condrho}
\sum_{k=0}^{n-1} \gamma_k=0
\end{equation}
Hence, using \eqref{principalpowers1} we find
\begin{align*}
\,\varrho(P,t)=\,\varrho_c(P,t)&=\sum_{k=1}^{n} \gamma_{n-k}\,\varrho(z^{k},t)=
\sum_{k=1}^{n} \gamma_{n-k}\frac{2t^k-t^{k-1}-t}{t-1}=(2t-1)T(t),
\end{align*}
where
$
T(t):=\frac{1}{t-1}\sum_{k=1}^{n} \gamma_{n-k}t^{k-1}
$
is a polynomial (note that the numerator vanishes at $t=1$ by virtue of \eqref{condrho}). Further, using \eqref{keromega} we obtains for any $x\in A_c(\lambda)$, $\lambda\ne1$,
\begin{align*}
\delta_1\omega^k(z)(c,x)&=k\omega^{k-1}(c)\omega(x)=0,\\
\delta_2\omega^k(z)(c,x,y)&=\frac{k(k-1)}{2}\omega^{k-2}(c)\omega(x)\omega(y)=0
\end{align*}
Therefore, it follows  for any $x\in A_c(\lambda)$ and $y\in A_c(\mu)$ by virtue of \eqref{Qeq}, \eqref{omegaphi} and \eqref{principalpowers2} that
\begin{equation}\label{Yplpl}
Y(\lambda,\mu,\nu)=\frac{\varrho(P,\nu)-\varrho(P,\lambda)}{\nu-\lambda} +\frac{\varrho(P,\nu)-\varrho(P,\mu)}{\nu-\mu}- \frac{\varrho(P,\nu)-\varrho(P,\frac12)}{\nu-\frac12}.
\end{equation}
This yields
$$
Y(\lambda,\mu,\nu)=Y(\mu,\lambda,\nu)=
\left\{
  \begin{array}{ll}
   0&\nu\ne \lambda,\mu,\half12 \,\,\text{ or }\,\,\nu=\lambda=\half12\ne \mu\\
\varrho'(P,\lambda)&\nu=\lambda\ne \mu,\half12\\
2\varrho'(P,\lambda)&\nu=\lambda=\mu\ne\half12\\
\varrho'(P,\half12)&\nu=\lambda=\mu=\half12\\
  \end{array}
\right.
$$
For example, if the Peirce spectrum $\sigma(P,c)$ is simple (i.e. all roots of $\varrho(P,t)$ are single), then one easily obtains the fusion laws found  by Guzzo, see Theorem~3.2 in \cite{Guzzo94}.

\subsection{Plenary train algebras}
Now let $A$ be a plenary train algebra of rank $n\ge2$, i.e. a commutative algebra satisfying the plenary identity
\begin{equation}\label{plenary1}
P:=h_0z^{[n]}+h_1(z)z^{[n-1]}+\ldots+ h_{n-2}z^{[2]}+ h_{n-1}z^{[1]}=0
\end{equation}
where $h_{k}=\gamma_{k}\omega^{2^{n-1}-2^{n-k-1}}(z)$ and $h_0=\gamma_0=1$. The simplest and prominent example is the so-called Bernstein algebras \cite{Etherington}, \cite{Holgate}, \cite{WBusekros} satisfying
\begin{equation}\label{JaiBernstein1}
x^2x^2=\,\omega(x)^2x^2.
\end{equation}
Though the algebraic structure of general Bernstein algebras is well-studied, their complete classification remains still a difficult unsolved problem.

We apply our method to the Peirce decomposition of a general plenary train algebra. As before, we have for any nonzero idempotent of $A$ that $\omega(c)=1$. Therefore, using Proposition~\ref{pro:plenarypow} one finds the Peirce polynomial
\begin{align*}
\varrho(P,t)=\varrho_c(P,t)&=\sum_{k=1}^{n} h_{n-k}(c)\,\varrho(z^{[k]},t)=
\sum_{k=1}^{n} (2t)^k\gamma_{n-k}.
\end{align*}
The latter Peirce polynomial is also known as the \textit{plenary train polynomial} of $A$. Then the plenary train roots of $A$ constitute exactly the Peirce spectrum of $P$ in \eqref{plenary1}. Applying \eqref{plenarypowers2} we obtain the Peirce symbol
\begin{equation}\label{Yplenary}
Y(\lambda,\mu,\nu)
=\sum_{k=1}^{n}
\gamma_{n-k}\frac{(2\nu)^k- (4\lambda\mu)^k}{\nu-2\lambda\mu}=\frac{ \varrho(P,\nu)-\varrho(P,2\lambda\mu)}{{\nu-2\lambda\mu}}
\end{equation}
 Similarly, using \eqref{Yplenary} one can derive the corresponding fusion laws in spirit of \cite{Fernand00}.

 \begin{remark}
 We point out that the explicit forms of the Peirce symbols $Y$ for principal and plenary powers given respectively by \eqref{Yplpl} and \eqref{Yplenary} are very different, but still have a common pattern in that their expressions can be written as a linear combination of terms
 $$
 \frac{ \varrho(P,\nu)-\varrho(P,\delta)}{{\nu-\delta}}
 $$
 for certain $\delta\in \sigma(P,c)$. It would be interesting to know whether the latter holds true in general.
 \end{remark}

\section{Hsiang algebras}\label{sec:Hsiang}

This class of algebras naturally emerges in the global geometry of minimal cones \cite{SimonV}, \cite{NTVbook}. The understanding of geometric and algebraic structure of minimal varieties is a challenging problem with various physical implications ranging from classical general relativity and brane physics \cite{Gibbons}. First  examples of algebraic degree three (cubic) minimal cones were constructed by Wu-Yi~ Hsiang \cite{Hsiang67} using the invariant theory. Hsiang also proposed a problem to classify all cubic minimal cones. Although the original problem lies in a pure geometrical context, the first progress in the classification  was achieved  by using representation theory of Clifford algebras  \cite{Tk10c}.  It can be shown \cite{NTVbook} that any cubic minimal cone carries a commutative nonassociative algebra structure  such that the defining polynomial of the minimal cone is the algebra norm. This bridges the analytic, geometric and algebraic faces of the problem. It turns out that classification of Hsiang algebras is intimately connected to Jordan and axial algebras structures, see \cite{Tk16}.

Below we apply our methods to derive the Peirce structure of Hsiang agebras.

\subsection{Metrized algebras}

An explicit definition of the underlying algebra structure is based on the concept of metrized algebras \cite{Bordemann}; we recall some standard facts on metrized algebras below.

A bilinear form $b(x,y)$ is called symmetric if $b(x,y)=b(y,x)$ and it is called  non-degenerate if its radical is trivial. A bilinear form $b(x,y)$ on an algebra $A$ is called \textit{associating} if
\begin{equation}\label{Qass}
b(xy,z)=b(x,yz), \quad \forall x,y,z\in A.
\end{equation}
An algebra carrying an associating non-degenerate symmetric bilinear form  is called \textit{metrized} \cite{Bordemann}.  The classical examples are the Killing form $\trace(\mathrm{ad}(x)\mathrm{ad}(y))$ of a Lie algebra and the invariant trace form $\trace L_{xy}$ of  a formal real (Euclidean) Jordan algebra \cite{FKbook}, \cite{Koecher}. Another important example is the Norton-Griess algebra $\mathfrak{G}$  appearing in connection with the Monster sporadic simple group \cite{Norton94} or, in general, many axial algebras \cite{HRS15}, \cite{Ivanov15}.

If one drops the non-degeneracy condition, the associating property \eqref{Qass} remains still interesting. Indeed, the associating forms in some axial algebras discussed in \cite{Rehren17}, \cite{HRS15} have nontrivial radical which depends on a concrete 3-transposition group representation.

An elementary observation which is also interesting in the context of this paper is the following  connection between algebras with associating symmetric bilinear form and baric algebras.

\begin{proposition}\label{pro:baric}
An algebra is baric if and only if it carries rank one associating symmetric bilinear form.
\end{proposition}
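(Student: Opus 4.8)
The plan is to prove the two implications separately, starting from the more transparent direction. Suppose first that $A$ is baric with weight homomorphism $\omega:A\to\Field$. I would set
$$
b(x,y):=\omega(x)\,\omega(y).
$$
This is manifestly a symmetric bilinear form of rank one (its image is the one-dimensional space $\omega(A)\cdot\omega(A)$, and it is nonzero since $\omega$ is nontrivial). The associating property \eqref{Qass} is immediate: using that $\omega$ is an algebra homomorphism,
$$
b(xy,z)=\omega(xy)\,\omega(z)=\omega(x)\,\omega(y)\,\omega(z)=\omega(x)\,\omega(yz)=b(x,yz).
$$
So one direction requires essentially no work beyond writing it down.

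For the converse, assume $A$ carries a rank one associating symmetric bilinear form $b$. Being rank one means $b$ can be written as $b(x,y)=\ell(x)\,\ell(y)$ for some nonzero linear functional $\ell:A\to\Field$; here I would invoke the standard linear-algebra fact that a symmetric bilinear form of rank one over a field of characteristic $\ne 2$ factors as $\varepsilon\,\ell\otimes\ell$ for a scalar $\varepsilon$ and a functional $\ell$, and absorb $\varepsilon$ (up to a square, or after a base-field adjustment) into $\ell$ — this is the one place where a small caveat about $\Field$ may be needed, and I expect it to be the main technical obstacle, since over a field that is not quadratically closed the scalar $\varepsilon$ may not be a square and one should phrase the statement so that $b$ is allowed to differ from $\ell\otimes\ell$ by a fixed nonzero scalar. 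Granting the factorization $b=\ell\otimes\ell$, the associating identity $b(xy,z)=b(x,yz)$ becomes
$$
\ell(xy)\,\ell(z)=\ell(x)\,\ell(yz)\qquad\text{for all }x,y,z\in A.
$$

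It remains to extract from this that $\omega:=\ell$ (suitably normalized) is an algebra homomorphism. Pick $z_0$ with $\ell(z_0)\ne0$; from the displayed identity with $z=z_0$ we get $\ell(xy)=\ell(x)\,\dfrac{\ell(yz_0)}{\ell(z_0)}$. Applying the same identity again, this time with the roles arranged to compute $\ell(yz_0)$, and using symmetry of $b$ (hence of the two-variable expression $\ell(x)\ell(yz)$ in its outer arguments after one transposition), I would show $\dfrac{\ell(yz_0)}{\ell(z_0)}=\ell(y)\cdot\dfrac{\ell(z_0^2)}{\ell(z_0)^2}$ or directly that the ratio $\lambda:=\ell(z_0z_0)/\ell(z_0)^2$ is forced so that $\ell(xy)=\lambda^{-1}\ell(x)\ell(y)$; rescaling $\omega:=\lambda\,\ell$ (legitimate since $\ell\ne0$ forces $\lambda\ne0$, as $b$ is nonzero) yields $\omega(xy)=\omega(x)\omega(y)$. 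Finally $\omega$ is nontrivial because $\ell$ is, so $A$ is baric. The only genuine subtlety, as flagged above, is the scalar appearing in the rank-one factorization of $b$; everything else is a short manipulation of the bilinear associating identity together with commutativity of $A$.
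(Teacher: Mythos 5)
Your proof is correct and follows essentially the same route as the paper: $b=\omega\otimes\omega$ in one direction, and in the other direction factoring the rank-one form as $\ell\otimes\ell$, specializing the associating identity at some $z_0$ with $\ell(z_0)\ne0$ to get $\ell(xy)=\tfrac{\ell(z_0^2)}{\ell(z_0)^2}\,\ell(x)\ell(y)$, and rescaling $\ell$ by that constant. The scalar $\varepsilon$ you flag in the factorization $b=\varepsilon\,\ell\otimes\ell$ is harmless because it cancels from both sides of $b(xy,z)=b(x,yz)$ (the paper simply writes $b=v\otimes v$ and ignores it); just note the constant comes out as $\lambda=\ell(z_0^2)/\ell(z_0)^2$ itself rather than $\lambda^{-1}$, i.e. $\ell(xy)=\lambda\,\ell(x)\ell(y)$, so the rescaling $\omega=\lambda\ell$ you propose is the right one.
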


\begin{proof}
Let $A$ be a baric algebra and let  $\omega:A\to \Field$ be the baric homomorphism. Then the bilinear form $b_\omega(x,y)=\omega(x)\omega(y)$ is obviously  associating symmetric bilinear form of {rank one}.
In the converse direction, let $b(x,y)\not\equiv0$ be a rank one associating symmetric bilinear form  on an algebra $A$. Then $b(x,y)=v(x)v(y)$ for some linear form $v:A\to \Field$. Let $z\in A$ satisfy $v(z)\ne0$ (such a $z$ exists because $b\not\equiv0$). Using
$$
v(xy)v(z)=b(xy,z)=b(x,yz)=v(x)v(yz)=b(y,xz)=v(y)v(xz),
$$
we obtain
$$
v(xz)v(z)=v(x)v(z^2) \quad \Rightarrow\quad
v(xz)=\frac{v(z^2)}{v(z)}v(x)
$$
therefore
$$
v(xy)=\frac{v(z^2)}{v^2(z)}v(x)v(y).
$$
Since $v\not\equiv0$, it follows that $v(z^2)\ne0$. This implies that  $\omega(x)=\frac{v(z^2)}{v^2(z)}v(x)$ is an nontrivial algebra homomorphism:
$$
\omega(xy)=\frac{v(z^2)}{v^2(z)}v(xy)=\frac{v(z^4)}{v^4(z)}v(x)v(y)= \omega(x)\omega(y),
$$
hence the algebra $A$ is baric.
\end{proof}

\subsection{Hsiang algebras}

\begin{definition}[\cite{Tk16}, \cite{NTVbook}] A \textit{Hsiang algebra} is a metrized commutative nonassociative algebra over $\R{}$ satisfying the identities:
\begin{equation}\label{Hsiangtrace}
\trace L_z=0, \quad \forall z\in A,
\end{equation}
and
\begin{equation}\label{Hsiang1}
a_1zz^3+a_2z^2z^2-a_3 b(z,z)z^2-a_4b(z,z^2)z=0,
\end{equation}
where $b(x,y)$ is a positive definite associating symmetric bilinear form, and $a_i\in\R{}$ subject to the nondegenracy condition $$
(a_1+a_2)(a_3+a_4)\ne0.
$$
\end{definition}

 The above algebra defining identity \eqref{Hsiang1} is a direct translation of the minimal surface equation (a certain nonlinear  partial differential equation) such that the defining equation of a minimal cone is written as a generic norm in a certain commutative nonassociative algebra. Explicit examples of Hsiang algebras will be given in section~\ref{sec:exHs} below. In short,  the connection between the geometric and algebraic sides of Hsiang algebras is very simple: one can show that given a Hsiang algebra $A$, the zero locus $f_A^{-1}(0)$ of the cubic form
$$
f_A(z):=b(z^2,z)
$$
is a minimal cone in the Euclidean vector space $A$ with the metric $b(z,z)$. Conversely, any minimal cone on a Euclidean vector space $V$ with an inner product $b$ satisfying a cubic polynomial   identity $f(z)=0$  gives rise in essentially unique manner to a  metrized commutative algebra on $V$ such that $f=f_A$ and \eqref{Hsiangtrace}-\eqref{Hsiang1} hold. We omit the derivation and  refer the interested readers to Chapter~6 in a recent monograph  \cite{NTVbook} for further details.

It can be shown that the zero locus (the minimal cone corresponding to $A$)
$$
A_0:=\{z\in A:b(z^2,z)=0\}
$$
is always a nontrivial real algebraic variety containing nonzero points.
Two Hsiang algebras are called equivalent if their zero loci are congruent, i.e. coincide upon a $b$-isometry. For the homogeneity reasons, any dilatation  $b\to kb$, where $0\ne k\in \R{}$, preserves the zero locus, hence the corresponding Hsiang algebras are equivalent. Taking the inner product with $x$ in \eqref{Hsiang1} and applying \eqref{Qass}, we find
$$
(a_1+a_2)b(z^2z^2,z)=(a_3+a_4)b(z,z^2)b(z,z),
$$
therefore by virtue of $(a_1+a_2)(a_3+a_4)\ne0$ we have
\begin{equation}\label{Hsiangk}
b(z^2z^2,z)=kb(z,z^2)b(z,z)
\end{equation}
for some $0\ne k\in \R{}$. Replacing $b$ by $kb$, we may assume by the made above remarks that $k=1$. Finally, linearizing
\begin{equation}\label{Hsiang2}
b(z^2z^2,z)=b(z,z^2)b(z,z),
\end{equation}
we arrive at
\begin{equation}\label{Hsiang3}
4z^4+z^2z^2-3 b(z,z)z^2-2b(z,z^2)z=0.
\end{equation}
This shows that any Hsiang algebra is equivalent to  an algebra satisfying \eqref{Hsiang3} and \eqref{Hsiangtrace}. Note also that any metrized algebra satisfying \eqref{Hsiangk} is Hsiang (the proof is by linearization of \eqref{Hsiangk}).

We call any algebra satisfying \eqref{Hsiang3} a \textit{normalized} Hsiang algebra.


Suppose for simplicity that $A$ is normalized. Since any Hsiang algebra is metrized, it contains  nonzero idempotents, see \cite{Tk15b}, \cite{Tk18a}, \cite{Tk18b}. Let $c\ne0$ be an idempotent in $A$. Then it follows from \eqref{Hsiang2} that $b(c,c)=1$. In particular, the Peirce polynomial is independent on a choice of an idempotent. Identifying
$$
\phi_{zz^3}=4, \quad
\phi_{z^2z^2}=1,\quad
\phi_{z^2}=-3 b(z,z),\quad
\phi_{z}=-2b(z,z^2),
$$
we obtain
$$
\phi_{zz^3}(c)=4, \quad
\phi_{z^2z^2}(c)=1,\quad
\phi_{z^2}(c)=-3,\quad
\phi_{x}(c)=-2,
$$
therefore  the Peirce polynomial is found from \eqref{Hsiang3} to be
\begin{equation}\label{Hsiangpeirce}
\begin{split}
\varrho(P,q)&=4\varrho(xx^3,q)+\varrho(x^2x^2,q)-3\varrho(x^2,q) -2\varrho(x,q)\\
&=2(4q^3+4q^2-q-1)\\
&=2(2q-1)(2q+1)(q+1).
\end{split}
\end{equation}
The Peirce spectrum is given by
$$
\sigma(P,c)=\{-1,-\half12,\half12\}
$$
and \textit{does not depend on a choice of an idempotent}.
Note also that $\lambda=1$ does not belong $\sigma(P,c)$, hence any idempotent in a Hsiang algebra is always primitive. Since $b$ is positive definite (in particular non-degenerate) and $\Field=\R{}$, it follows that $c$ is  semi-simple. Hence we arrive at the Peirce decomposition
$$
A=A_c(1) \oplus A_c(-1)\oplus A_c(-\half12)\oplus A_c(\half12), \qquad \text{where }A_c(1)=\R{}c.
$$
It also follows from \eqref{Qass} that the latter direct decomposition is orthogonal with respect to $b$. Furthermore, the traceless condition ~\eqref{Hsiangtrace} implies the following obstructions on the Peirce dimensions:
\begin{equation}\label{I90}
n_3(c)=2n_1(c)+n_2(c)-2
\end{equation}
and
\begin{equation}\label{I80}
\dim A=3n_1(c)+2n_2(c)-1,
\end{equation}
where
$$
\phantom{\frac{1}{3}}n_1(c)=\dim A_c(-1),\quad n_2(c)=\dim A_c(-\half{1}{2}),\quad n_3(c)=\dim A_c(\half{1}{2}).
$$

Next, in order to derive the fusion laws, we consider the Peirce symbol
\begin{equation}\label{Yeq}
Y(\lambda,\mu,\nu)=8(\nu^2+\lambda^2+\mu^2)+ 8(\nu\lambda+\nu\mu+ \lambda\mu)+4(\lambda+\mu+\nu)-6.
\end{equation}
Note that $Y(\lambda,\mu,\nu)$ is completely symmetric in the three arguments (as a corollary of \eqref{Qass}). We also have $\delta_1\phi_{z^2z^2}=\delta_1\phi_{z^3z}=0$ and
$$
\delta_1\phi_{z^2}(x;u)=-6b(x,u), \qquad
\delta_1\phi_{z}(x;u)=-6b(x^2,u),
$$
hence
\begin{align*}
\tau_1(u,a)&=\delta_1\phi_{z^2}(c;u)\,\varrho(z^2,a)+ \delta_1\phi_{z}(c;u)\,\varrho(z,a)=-6b(c,u)(1+2a).
\end{align*}
Since $c$ and $A_c(\lambda)$ are orthogonal, we have $\tau_1(x,\lambda)=0$ for any $x\in A_c(\lambda)$, where $\lambda\in \{-1,-\frac12,\frac12\}$.
Furthermore, $\delta_2\phi_{z^2}(x;u,v)=-6$  and $\delta_2\phi_{z}(x;u,v)=-12b(xu,v)$, hence
$\tau_2(u,v)= -12b(cu,v).$
In summary, applying \eqref{Qeq}  we obtain for any $u\in A_c(\lambda)$, $v\in A_c(\mu)$
\begin{equation}\label{Y1}
\sum_{\nu\in \{1,-1,-\frac12,\frac12\}} Y(\lambda,\mu,\nu)(uv)_{\nu}=
(12\lambda+6)b(u,v)c.
\end{equation}
\begin{table}[ht]
	\renewcommand{\arraystretch}{1.4}
	\setlength{\tabcolsep}{0.85em}
	\begin{tabular}{r|rrrr}
			$\star$	& $1$ & $-1$ & $-\half12$ & $\half12$\\
		\hline
			$1$ & $1$ & $-1$ & $-\half12$ & $\half12$\\
			$-1$ &	& $1$ & $\half12$ &$-\half12,\half12$\\
			$-\half12$ &	&	& $1,-\half12$ & $-1,\half12$\\
            $\half12$ &	&	& & $1,-1,-\half12$
	\end{tabular}
\smallskip
	\caption{Hsiang algebra fusion rules}
	\label{tab2}
    \end{table}

For example, setting $u,v\in A_c(-1)$ and using \eqref{Yeq} yields
$$
6(uv)_{1}+30(uv)_{-1}+18(uv)_{-\frac12}+6(uv)_{\frac12}=-6b(u,v)c,
$$
which immediately implies that $(uv)_{-1}=(uv)_{-\frac12}=(uv)_{\frac12}=0$, i.e. $A_c(-1)A_c(-1)\subset \R{}c$, therefore
$$
uv=(uv)_{1}=-b(u,v)c.
$$
Similarly, if $u,v\in A_c(-\frac12)$  then
$$
8(uv)_{-1}-4(uv)_{-\frac12}=0,
$$
which implies $(uv)_{-1}=(uv)_{-\frac12}=0$, thus,
$$
A_c(-\half12)A_c(-\half12)\subset \R{}c\oplus A_c(\half12).
$$
A similar argument easily yields the fusion laws of $A$ shown in Table~\ref{tab2}.

\subsection{Hsiang algebras from Jordan algebras}\label{sec:exHs}

Some examples of Hsiang algebras can be obtained by contraction of Jordan algebras on their subspaces. Let us recall the definition of contraction given by Griess \cite{Griess2010}. Let  $A$ be any algebra, $\pi:A\to B$ be a projection of vector spaces, i.e. $\pi^2=\pi$ and $\pi(A)=B$. Define an algebra structure on $B$ by
$$
x\circ y=\pi(x\cdot y), \qquad x,y\in B,
$$
where $x\cdot x=xy$ means the product in $A$. The algebra $A_\pi:=(B,\circ)$ is called the \textit{contraction} of $A$
to $B$ with respect to the projection $\pi$. Note that if $A$ is commutative, so is the contraction. Furthermore, if $A$ is metrized then so is $B$. Indeed, for any $x,y,z\in B$ we have $x\cdot y=x\circ y+h$ for some $h\in H$, hence
\begin{equation}\label{BH}
b(x\cdot y,z)=b(x\circ y+h,z)=b(x\circ y,z),
\end{equation}
therefore $b(x\cdot y,z)=b(x,y\cdot z)$ implies $b(x\circ y,z)=b(x,y\circ z)$.

Now suppose that $H$ is a nonzero subalgebra of a metrized  algebra $A$ and let
\begin{equation}\label{ABH}
A=B\oplus H
\end{equation}
be the $b$-orthogonal decomposition as vector spaces. Let us consider the contraction of $A$ on $B$. Since $HH\subset H$ we  have $b(x\cdot h_1,h_2)=b(x,h_1\cdot h_2)=0$
for any $x\in B$ and $h_i\in H$. This implies
$$
B\cdot H\subset B.
$$
Using \eqref{BH} we have for any $x,y\in B$
\begin{equation}\label{by}
b(y\circ y,x)=b(y\cdot y,x)=
b(y,y\cdot x).
\end{equation}

Now suppose  that $A$ is a finite-dimensional formal real Jordan algebra over $\R{}$ with unit $e$.  Any Jordan algebra is power-associative and satisfies a polynomial relation (given by the so-called generic minimal polynomial \cite[p.~452]{Knus}). The  degree of the generic minimal polynomial is called the {rank} of a Jordan algebra. In what follows we assume  that $A$ \textit{has rank three}, i.e. any element $x\in A$ is annihilated by a cubic polynomial:
\begin{equation}\label{minimalpol}
x^3-\alpha(x)x^2+\beta(x)x-\gamma(x)e=0.
\end{equation}
If $A$ is formally real then the generic trace form $\alpha(x)$ gives rise to an associating positive definite bilinear form
$$
b(x,y):=\alpha(xy)
$$
on $A$ \cite{Koecher}, \cite{FKbook}, thus  $A$ is a metrized algebra.

\begin{theorem}
Let $A$ be a rank three formally real simple Euclidean Jordan algebra and $H\subset A$ be a subalgebra of $A$. Then the contraction of $A$ on $B=H^\bot$ is a Hsiang algebra.
\end{theorem}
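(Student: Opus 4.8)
The plan is to verify the single scalar identity \eqref{Hsiangk} for the contracted algebra $B=H^\bot$ and then invoke the remark that a metrized algebra satisfying \eqref{Hsiangk} is a Hsiang algebra. Write $\cdot$ for the Jordan product of $A$, $\circ$ for the contracted product on $B$ (so $x\circ y=\pi(x\cdot y)$, where $\pi\colon A\to B$ is the $b$-orthogonal projection with $\ker\pi=H$), and $b(x,y)=\alpha(xy)$ for the positive definite generic trace form, so that $b(x,e)=\alpha(x)$. By the discussion preceding the theorem $(B,\circ)$ is commutative and metrized, $H\cdot H\subset H$, and $B\cdot H\subset B$. I will use that $e\in H$; this is where unitality of the subalgebra enters, and it is indispensable, because $B=H^\bot$ must consist of traceless elements for \eqref{Hsiangtrace} to hold. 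Granting $e\in H$, every $z\in B$ is traceless ($\alpha(z)=0$), and this already yields \eqref{Hsiangtrace} for $B$: for a $b$-orthonormal basis $\{f_l\}$ of $B$, metrizedness gives $\trace L^\circ_z=b\bigl(z,\sum_l f_l\circ f_l\bigr)=b\bigl(z,\sum_l f_l^2\bigr)$; completing $\{f_l\}$ to a $b$-orthonormal basis of $A$ by a basis of $H$, the element $\sum_l f_l^2$ differs from the trace vector $\sum_i e_i^2$ of $A$ by an element of $H\cdot H\subset H$, while $\sum_i e_i^2\in\R{}e$ by simplicity, so that $\trace L^\circ_z$ is a multiple of $b(z,e)=\alpha(z)=0$.

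First I would transfer the left-hand side of \eqref{Hsiangk} to powers in $A$. For $x,y,u\in B$ one has $b(x\circ y,u)=b(x\cdot y,u)$, since $x\cdot y-x\circ y\in H$ is $b$-orthogonal to $u$. Fix $z\in B$, put $z^2:=z\cdot z$ and $h:=z^2-z^{\circ2}\in H$, so $z^{\circ2}=z^2-h$ and, applying $\alpha$, $\alpha(h)=\alpha(z^2)=b(z,z)$. Expanding $z^{\circ2}\cdot z^{\circ2}=z^4-2\,z^2\!\cdot h+h\cdot h$ by power-associativity in $A$ and pairing against $z\in B$, the term $b(h\cdot h,z)$ vanishes because $h\cdot h\in H$, while $b(z^2\!\cdot h,z)=b(h,z^3)$ by commutativity and \eqref{Qass}; hence
\begin{equation*}
b\bigl(z^{\circ2}\circ z^{\circ2},z\bigr)=b\bigl(z^{\circ2}\cdot z^{\circ2},z\bigr)=b(z^4,z)-2\,b(h,z^3).
\end{equation*}

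Next I would evaluate the three scalars using the rank-three relation \eqref{minimalpol}, $z^3=\alpha(z)z^2-\beta(z)z+\gamma(z)e$, with $\alpha(z)=0$. Abbreviating $\beta=\beta(z)$, $\gamma=\gamma(z)$, this gives $z^3=-\beta z+\gamma e$, which is exactly the $B\oplus H$ splitting of $z^3$ (as $-\beta z\in B$, $\gamma e\in H$), so $b(h,z^3)=\gamma\,b(h,e)=\gamma\,\alpha(h)=\gamma\,b(z,z)$; also $z^4=z\cdot z^3=-\beta z^2+\gamma z$. The Newton identities in a rank-three Jordan algebra give $b(z,z)=\alpha(z^2)=-2\beta$ and $b(z^2,z)=\alpha(z^3)=3\gamma$, whence $b(z^4,z)=-\beta\,b(z^2,z)+\gamma\,b(z,z)=-5\beta\gamma$ and $b(z,z^{\circ2})=b(z,z^2)=3\gamma$. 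Substituting,
\begin{equation*}
b\bigl(z^{\circ2}\circ z^{\circ2},z\bigr)=-5\beta\gamma-2\gamma(-2\beta)=-\beta\gamma=\tfrac{1}{6}\,(3\gamma)(-2\beta)=\tfrac{1}{6}\,b(z,z^{\circ2})\,b(z,z),
\end{equation*}
so $B$ satisfies \eqref{Hsiangk} with $k=\tfrac{1}{6}\ne0$. Being metrized with a positive definite form, $B$ is then a Hsiang algebra by the cited remark (linearizing \eqref{Hsiangk} once and using non-degeneracy of $b|_B$ produces the algebra identity \eqref{Hsiang3} for $B$, after rescaling $b\mapsto\tfrac{1}{6}b$).

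I expect the main obstacle to be the bookkeeping of $B$- versus $H$-components: one must track which parts of $z^2,z^3,z^4$ lie in $H$ and check that pairing against $z\in B$, together with $B\cdot H\subset B$ and $H\cdot H\subset H$, annihilates precisely those parts, so the computation closes in terms of the generic invariants $\beta(z),\gamma(z)$ alone. The hypothesis $e\in H$ is exactly what makes this possible: it forces $z\in B$ to be traceless and collapses \eqref{minimalpol} to $z^3=-\beta z+\gamma e$, which is the crux of both the identity \eqref{Hsiangk} and the traceless condition \eqref{Hsiangtrace}.
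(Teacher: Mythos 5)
Your proposal is correct and follows essentially the same route as the paper: you reduce the claim to the scalar identity \eqref{Hsiangk} (obtaining $k=\tfrac16$), exploit $e\in H$ to make every element of $B$ traceless so that \eqref{minimalpol} collapses to $z^3=-\beta z+\gamma e$ with the Newton identities $b(z,z)=-2\beta$, $b(z,z^2)=3\gamma$, and verify \eqref{Hsiangtrace} by the same orthonormal-basis argument using $\sum_i e_i^2=e$ for simple $A$. The only (cosmetic) difference is that you expand $z^{\circ2}\cdot z^{\circ2}$ directly via power-associativity and pair against $z$, whereas the paper shifts one factor across the associating form first; both land on the same intermediate expression $b(z^2,z^3)-2b(z^3,h)$ and the same final coefficient.
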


\begin{proof}
It is well-known  and easily follows from \eqref{minimalpol} that $\beta(x)$ and $\gamma(x)$ are explicitly determined by $\alpha$ by virtue of the Newton identities
\begin{equation}\label{Newton}
\begin{split}
\beta(x)&=\half12(\alpha(x)^2-\alpha(x^2)),\\
\gamma(x)&=\half16(\alpha^3(x)-3\alpha(x^2)\alpha(x)+2\alpha(x^3)).
\end{split}
\end{equation}
Let $H\subset A$ be a nonzero subalgebra and let $B=H^\bot$ (such that \eqref{ABH} holds) be the contraction of $A$ with respect to the $b$-orthogonal projection. By the above, $b$ is also positive definite and associating on $B$. Since $e\in H$ we have
\begin{equation}\label{be0}
0=b(x,e)=\alpha(xe)=\alpha(x),\qquad \text{ for all $x\in B$.}
\end{equation}
Therefore $\beta(x)=-\frac12\alpha(x^2)=-\frac12b(x,x)$ and $\gamma(x)=\frac13\alpha(x^3)=\frac13b(x,x^2)$ on $B$, hence
\begin{equation}\label{minimalpolA}
x^3=\half13b(x,x^2)e+\half12b(x,x)x
\end{equation}

Now, specializing $y=x^{\circ 2}=x\circ x=x^2-h$ in \eqref{by}, where $h\in H$, we obtain
\begin{equation}\label{WBOT2}
\begin{split}
b(x^{\circ 2}\circ x^{\circ 2},x)&=
b(x^{\circ 2},x\cdot x^{\circ 2})=
b(x^{\circ 2},x^3-x\cdot h)=
b(x^{\circ 2},x^3)-b(x^{\circ 2},x\cdot h)=
\\
&
=b(x^2-h,x^3)-b(x^2-h,x\cdot h)=
b(x^2,x^3)-2b(x^3,h)
\end{split}
\end{equation}
Using \eqref{minimalpolA} and \eqref{be0},
\begin{align*}
b(x^3,h)&=\half13b(x,x^2)b(e,h)+\half12b(x,x)b(x,h)
=\half13b(x,x^2)b(e,x^2-x^{\circ2})\\
&=\half13b(x,x^2)b(e,x^2)=
\half13b(x,x^2)b(x,x),
\end{align*}
and similarly
\begin{align*}
b(x^2,x^3)=\half13b(x,x^2)b(x^2,e)+\half12b(x,x)b(x^2,x)= \half56b(x,x)b(x^2,x).
\end{align*}
Combining the obtained expressions with \eqref{WBOT2} we obtain
$$
b(x^{\circ 2}\circ x^{\circ 2},x)=\half16b(x,x)b(x^2,x)=
\half16b(x,x)b(x\circ x,x),
$$
which proves \eqref{Hsiangk} with $k=\frac16$.

Finally, suppose that $\{e_i\}_{1\le i\le k}$ is an orthonormal basis in $B$, and let $\{e_i\}_{k+1\le i\le n}$ be the completion to an orthonormal basis of $A$ with $e_n=e$. Since $A$ is simple,
$$
\sum_{i=1}^ne_i^2=e
$$
see for example 6b) on p.~59 in \cite{FKbook}, therefore
$$
\sum_{i=1}^ke_i^2=e-\sum_{i=k+1}^ne_i^2\in H,
$$
hence if $L_{\circ x}:y\to x\circ y$ the multiplication operator on $B$, we obtain for any $x\in B$
$$
\trace L_{\circ x}=\sum_{i=1}^kb(L_{\circ x}e_i, \, e_i)=
b\left(\sum_{i=1}^k e_i\circ e_i, x\right)=0.
$$
the latter proves \eqref{Hsiangtrace}. The theorem is proved.
\end{proof}

\begin{remark}
It follows from the proof of the theorem that the simplicity of $A$ is only needed for the trace-free property, while the defining relation \eqref{Hsiangk} is valid for a general Jordan algebra $A$.
\end{remark}

\begin{example}
Let $A=\mathfrak{h}_r(\mathbb{A}_d)$ be the algebra over the reals on the vector space of all Hermitian matrices of size $r$ over the Hurwitz division algebra $\F_d$, $d\in \{1,2,4,8\}$ (the reals $\F_1=\R{}$, the complexes $\F_2=\mathbb{C}$, the Hamilton quaternions $\F_4=\mathbb{H}$ and the Graves-Cayley octonions $\F_8=\mathbb{O}$) with the multiplication
$$
x\cdot y=\half12(xy+yx),
$$
where $xy$ is the standard matrix multiplication. Then it is classically known that if $r\le 3$ and $d\in \{1,2,4,8\}$, or if $r\ge 4$ and $d\in \{1,2,4\}$ then $A$ is a \textit{simple} Jordan algebra with the unit matrix $e$ being the algebra unit. In that case $\alpha(x)=\trace x$ is the trace of the matrix $x$.

The simplest example is the $\frac{(r-1)(r+2)}{2}$-dimensional commutative algebra $B$ obtained by the contraction of $A=\mathfrak{h}_r(\mathbb{A}_1)$ with $H=\Span(e)$. In other words,  $B$ is obtained as the vector space of all real trace free matrices of size $r\ge 2$ with the multiplication
$$
x\circ y=\frac12(xy+yx)-\frac{\trace xy}{r}e.
$$
\end{example}

\subsection{Nourigat--Varro algebras}
Finally, we want to mention that very recently some similar algebraic structures also appeared independently in the context of $\omega$-PI algebras of degree four in the works of by M. Nourigat  and R. Varro  \cite{NourigatI}, \cite{NourigatII}, \cite{NourigatIII}. A model example there is commutative \textit{baric} algebras satisfying identity
$$
a_1 z^2z^2 + a_2z^4 = b_1\omega(z)z^3+ b_2\omega^2(z)z^2+b_3\omega^3(z)z.
$$
The latter identity can be thought as an analogue of the Hsiang identity for a degenerate associating form $b$. Indeed, by Proposition~\ref{pro:baric} $g(x,y)=\omega(x)\omega(y)$ is an associating bilinear form of rank 1. Therefore it is natural to combine these cases, to a general equation
\begin{equation}\label{general}
a_1z^4+a_2z^2z^2=b_1g(z,z)z^2+b_2g(z,z^2)z,
\end{equation}
where $g(x,y)$ is an \textit{associating} symmetric form,  not necessarily nondegenerate. If $g$ is positive definite then one arrives at the defining identity  \eqref{Hsiang3}, while $g(x,y)=\omega(x)\omega(y)$  fits the definition of the Nourigat--Varro algebras. Some other relevant examples of identities of degree four can be found in \cite{ElOkubo}, \cite{EldLabra}.

\bibliographystyle{plain}
\def\cprime{$'$}

\end{document}